\documentclass[11pt]{article}
\usepackage{a4wide}
\usepackage{amsmath, amsthm, xcolor}
\usepackage{amssymb}
\usepackage{graphicx,soul}
\usepackage[normalem]{ulem}
\usepackage{relsize}
\usepackage{enumerate}
\newcommand{\hidden}[1]{}
\usepackage[colorlinks, linkcolor=blue, anchorcolor=blue, citecolor=blue]{hyperref}
\usepackage{bm}
\usepackage{color}
\usepackage[mathscr]{euscript}
\usepackage{tikz}
\numberwithin{equation}{section}

\newtheorem{theorem}{Theorem}[section]
\newtheorem{proposition}[theorem]{Proposition}

\newtheorem{corollary}[theorem]{Corollary}
\newtheorem{lemma}[theorem]{Lemma}

\newtheorem{definition}{Definition}
\theoremstyle{remark}
\newtheorem{remark}[theorem]{{Remark}}

\begin{document}

\title{$P$-adic $s$-energy and Fourier dimension}


 \author{Cheng Liu \\ {\small\sc (Wuhan) } \and Lingmin Liao \\ {\small\sc (Wuhan) }}

\bigskip

\date{}

 \maketitle

\begin{abstract}
	We extend the theory of $s$-energy to $\mathbb{Q}_p^n$. We give an exact representation for the $s$-energy in terms of the Fourier coefficients of a finite Borel measure with support in $\mathbb{Z}_p^n$, which is stronger than the approximate representation in $\mathbb{R}^n$. We also compute the Fourier dimension of the $p$-adic Riesz product measure, even in the case $p=2$, which has never been studied before.
\end{abstract}
$\textbf{Keywords}$: $p$-adic harmonic analysis,
Fourier dimension, $p$-adic Riesz product
\bigskip

\section{Introduction}
Let $G$ be a locally compact abelian group equipped with a metric $d$. The \textit{support} of a measure $\mu$ on $G$, denoted by $\text{spt}\mu$, is the smallest closed set $F$ such that $\mu(G\setminus F)=0$. For $A\subset G$, the set of all Borel measures $\mu$ on $G$ with $0<\mu(A)<+\infty$ and with compact $\text{spt}\mu\subset A$ will be denoted by $\mathcal{M}(A)$. For any real number $s>0$, define the \textit{$s$-energy} of a measure $\mu\in \mathcal{M}(A)$ by
\begin{align}\label{isg} 
  I_s^G(\mu)=\int_G\int_G d(x,y)^{-s}d\mu(x)d\mu(y).
\end{align}
If $G=\mathbb{R}^n$, we abbreviate $I_s^G(\mu)$ as $I_s(\mu)$. The properties and applications of $I_s(\mu)$
have been extensively studied. This quantity is closely connected to a number of fundamental problems in geometric measure theory, including those involving Hausdorff dimension, Fourier dimension, the distance problem, and Marstrand's projection theorem. For further details, we refer the reader to \cite{k68,m15}.\par
Let $p\ge 2$ be a prime. Let $\mathbb{Q}_p$ be the field of $p$-adic numbers and $\mathbb{Q}_p^n$ be the $n$-th Cartesian product of $\mathbb{Q}_p$. 
Recently, $p$-adic harmonic analysis has attracted considerable attention. In 2019, Fan, Fan, Liao and Shi \cite{ffls19} resolved the $p$-adic Fuglede conjecture in dimension one. In 2024, Arsovski \cite{a24} settled the $p$-adic Kakeya conjecture. Subsequently, in 2025, Fan and Fan \cite{ff25} established the existence of a Riesz basis of exponentials for every finite union of balls in $\mathbb{Q}_p^n$, as well as the non-existence of such a basis for certain bounded sets in $\mathbb{Q}_p^n$. \par
We now introduce the main object of study of this paper. Let $G=\mathbb{Q}_p^n$ and $\mu\in\mathcal{M}(\mathbb{Q}_p^n)$. Then \eqref{isg} becomes
\[I_s^p(\mu)=\int_{\mathbb{Q}_p^n}\int_{\mathbb{Q}_p^n} |x-y|_p^{-s}d\mu(x)d\mu(y),\]
called the \textit{$p$-adic $s$-energy} of a Borel measure $\mu$.
\par
Denote the \textit{diameter} of a set $A\subset \mathbb{Q}_p^n$ by $d(A)$. For any Borel set $A\subset\mathbb{Q}_p^n$ and any real number $s\ge 0$, define \textit{Hausdorff measure} $\mathcal{H}^s$ by 
\[\mathcal{H}^s(A)=\lim_{\delta\to 0}\mathcal{H}^s_{\delta}(A),\]
where, for $0<\delta\le +\infty$
\[\mathcal{H}^s_{\delta}(A)=\inf\left\{\sum_j d(E_j)^s: A\subset \bigcup_jE_j,\ d(E_j)<\delta\right\}.\]
The \textit{Hausdorff dimension} of $A\subset\mathbb{Q}_p^n$ is
\[\dim_H(A)=\inf\left\{s:\mathcal{H}^s(A)=0\right\}=\sup\left\{s:\mathcal{H}^s(A)=+\infty\right\}.\]
The following theorem gives a relationship between Hausdorff dimension and $p$-adic $s$-energy.
\begin{theorem}\label{hwdl}
  For a Borel set $A\subset \mathbb{Q}_p^n$,
  \[\dim_H(A)=\sup\{0\le s\le n:\ \text{there exists}\ \mu\in \mathcal{M}(A),\ \text{such that}\ I_s^p(\mu)<+\infty\}.\]
\end{theorem}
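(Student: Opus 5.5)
We prove the two inequalities separately, following the Euclidean argument (Frostman's lemma plus the energy/potential estimate) and replacing dyadic cubes by $p$-adic balls. In $\mathbb{Q}_p^n$ with the max-norm $|x|_p=\max_i|x_i|_p$, every ball is of the form $a+p^k\mathbb{Z}_p^n$. Two such balls are either nested or disjoint, and the diameter of a ball of radius $p^{-k}$ is exactly $p^{-k}$. This ultrametric structure makes the covering arguments cleaner than in $\mathbb{R}^n$. Note also that $\dim_H A\le n$, since $\mathbb{Z}_p^n$ is covered by $p^{kn}$ balls of diameter $p^{-k}$.

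\emph{Upper bound.} Suppose $\mu\in\mathcal{M}(A)$ with $I_s^p(\mu)<\infty$; we show $\dim_H A\ge s$.

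1. Since $I_s^p(\mu)=\int\phi_s(x)\,d\mu(x)$ with $\phi_s(x)=\int|x-y|_p^{-s}d\mu(y)$, finiteness gives a constant $M$ and a compact set $F\subset\operatorname{spt}\mu\subset A$ with $\mu(F)>0$ and $\phi_s\le M$ on $F$.

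2. Let $\nu=\mu|_F$. For $x\in F$ and any ball $B(x,r)$ we have $|x-y|_p^{-s}\ge r^{-s}$ on the ball, so $\nu(B(x,r))\le M r^{s}$.

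3. Let $\{E_j\}$ be a cover of $F$ with $E_j\cap F\ne\emptyset$. In the ultrametric setting each $E_j$ lies in a ball $B(x_j,d(E_j))$ centred at a point $x_j\in E_j\cap F$. Hence
\[
\sum_j d(E_j)^s\ \ge\ M^{-1}\sum_j\nu\bigl(B(x_j,d(E_j))\bigr)\ \ge\ M^{-1}\nu(F)>0 .
\]
So $\mathcal{H}^s(A)\ge\mathcal{H}^s(F)>0$, giving $\dim_H A\ge s$. Taking the supremum over all such $s$ gives the inequality $\ge$.

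\emph{Lower bound.} Let $t<s<\dim_H A$. We need $\mu\in\mathcal{M}(A)$ with $I_t^p(\mu)<\infty$. We first establish a $p$-adic Frostman lemma: since $\mathcal{H}^s(A)>0$, there is $\mu\in\mathcal{M}(A)$ with $\mu(B)\le C\,d(B)^s$ for every ball $B$. Given this, the energy estimate is routine. Split $\mathbb{Q}_p^n$ into the shells $|x-y|_p=p^{-k}$, each contained in the ball $B(x,p^{-k})$:
\[
\phi_t(x)\ \le\ \sum_{k}p^{kt}\,\mu\bigl(B(x,p^{-k})\bigr)\ \le\ C\sum_{k\ge k_0}p^{-k(s-t)}+\text{const},
\]
where the terms with $k<k_0$ are bounded because $\mu$ has compact support. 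The sum is uniformly bounded, so $I_t^p(\mu)<\infty$. Letting $t\uparrow\dim_H A$ gives the inequality $\le$.

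\emph{Main obstacle: Frostman's lemma.} We proceed in two steps.

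1. \emph{Compact sets.} Reduce to $A$ compact, using that Borel (indeed analytic) sets of positive $\mathcal{H}^s$-measure contain compact subsets of positive measure. This inner regularity carries over verbatim from the Euclidean proof, since $\mathbb{Q}_p^n$ is a complete separable metric space. After translation and scaling we may assume $A\subset\mathbb{Z}_p^n$.

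2. \emph{Construction on the tree.} Use the $p^n$-ary tree of balls $a+p^k\mathbb{Z}_p^n$. At level $m$, put mass $p^{-ms}$ on each level-$m$ ball meeting $A$. Then go up the tree: whenever a ball $B$ carries more than $d(B)^s$, rescale the mass inside $B$ down to exactly $d(B)^s$. At the end, every level-$m$ ball meeting $A$ lies in some ball $B$ with $\mu_m(B)=d(B)^s$. Those maximal balls cover $A$, so
\[
\mu_m(\mathbb{Z}_p^n)\ \ge\ \mathcal{H}^s_\infty(A)>0 .
\]
Take a weak-$*$ limit of the $\mu_m$. Since balls in $\mathbb{Q}_p^n$ are clopen, their measures pass to the limit exactly, so the limit keeps both the bound $\mu(B)\le d(B)^s$ and positive total mass. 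Finally, $\mathcal{H}^s_\infty(A)>0$ follows from $\mathcal{H}^s(A)>0$.

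This step is the heart of the proof. We expect the care to be needed in the bookkeeping of the rescaling and in checking that the support of the limit measure lies in $A$; the latter holds because $A$ is compact.
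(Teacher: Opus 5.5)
Your proof is correct and has the same architecture as the paper's. The direction $\dim_H(A)\ge s$ comes from restricting $\mu$ to a set where the potential $\int|x-y|_p^{-s}d\mu(y)$ is bounded (the paper's Lemma~\ref{hwsl2}), and the reverse direction comes from a Frostman measure plus a shell-by-shell estimate of the $t$-energy (Lemma~\ref{hwsl1}). The only real difference is that you prove the $p$-adic Frostman lemma yourself (mass distribution for the easy direction, the $p^n$-ary tree construction with reduction to compact sets for the hard one), whereas the paper simply cites it as Lemma~\ref{Frostman}.
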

Since $\mathbb{Q}_p^n$ is a locally compact abelian group, there exists a unique Haar measure $dx$ with $\int_{\mathbb{Z}_p^n}1dx=1$, where $\mathbb{Z}_p^n:=\{x\in\mathbb{Q}_p^n: |x|_p\le 1\}$ is the ring of $p$-adic integers. Let $\chi(x)=e^{2\pi i\{x\}_p}$ for all $x\in\mathbb{Q}_p$, where $\{x\}_p$ is defined as in \eqref{def of fp}. The Fourier transform of a Borel measure $\mu$ on $\mathbb{Q}_p^n$ is defined by 
\[\widehat{\mu}(\xi)=\int_{\mathbb{Q}_p^n} \chi(-\xi x)d\mu(x),\ \forall \xi\in\mathbb{Q}_p^n.\]
The following formula is a key to link the Hausdorff dimension to the Fourier transform.
\begin{theorem}\label{thm1.2}
  Let $\mu\in\mathcal{M}(\mathbb{Q}_p^n)$ and $0<s<n$. Then,
  \[I_s^p(\mu)=\frac{1-p^{-s}}{1-p^{s-n}}\int_{\mathbb{Q}_p^n} |\widehat{\mu}(x)|^2|x|_p^{s-n}dx.\]
\end{theorem}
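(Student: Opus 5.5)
The plan is to avoid distributional Fourier transforms of the Riesz kernel entirely. We decompose $|z|_p^{-s}$ as a positive combination of indicators of balls centred at $0$, use the fact that such indicators have explicit and positive Fourier transforms, and then resum. Throughout, $|\cdot|_p$ on $\mathbb{Q}_p^n$ is the max norm, so balls $B_k:=\{z:|z|_p\le p^{-k}\}$, $k\in\mathbb{Z}$, are additive subgroups with Haar measure $p^{-kn}$.

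\emph{Step 1 (layer-cake decomposition).} For $z\neq 0$ with $|z|_p=p^{-m}$ we have the telescoping identity
\[
|z|_p^{-s}=p^{ms}=\sum_{k\le m}\bigl(p^{ks}-p^{(k-1)s}\bigr)=(1-p^{-s})\sum_{k\in\mathbb{Z}}p^{ks}\,\mathbf 1_{B_k}(z).
\]
The series converges because $s>0$. For $z=0$ both sides are $+\infty$. Integrating against $\mu\times\mu$ and using Tonelli (all terms are nonnegative) gives
\[
I_s^p(\mu)=(1-p^{-s})\sum_{k\in\mathbb{Z}}p^{ks}\,(\mu\times\mu)\{(x,y):|x-y|_p\le p^{-k}\}.
\]

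\emph{Step 2 (Fourier side of each term).} The standard computation $\int_{B_{-k}}\chi(\xi\cdot z)\,d\xi=p^{kn}\mathbf 1_{B_k}(z)$ is the orthogonality of characters on the compact group $B_{-k}=\{|\xi|_p\le p^{k}\}$. The point is that $\xi\mapsto\chi(\xi\cdot z)$ is trivial on $B_{-k}$ exactly when $|z|_p\le p^{-k}$. Hence
\[
\mathbf 1_{B_k}(x-y)=p^{-kn}\int_{|\xi|_p\le p^k}\chi(\xi\cdot x)\,\overline{\chi(\xi\cdot y)}\,d\xi .
\]
Since $\mu$ is finite and the integrand is bounded on the finite-measure set $B_{-k}$, Fubini applies. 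It yields
\[
(\mu\times\mu)\{|x-y|_p\le p^{-k}\}=p^{-kn}\int_{|\xi|_p\le p^k}|\widehat\mu(\xi)|^2\,d\xi .
\]
I expect this step, namely justifying the character integral and the interchange, to be the only real technical point. It is routine here because each piece is a finite integral.

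\emph{Step 3 (resummation).} Substituting into Step 1 gives
\[
I_s^p(\mu)=(1-p^{-s})\sum_k p^{k(s-n)}\int_{|\xi|_p\le p^k}|\widehat\mu(\xi)|^2\,d\xi .
\]
Applying Tonelli once more, for $\xi$ with $|\xi|_p=p^j$ the set of contributing $k$ is $k\ge j$. Because $s<n$,
\[
\sum_{k\ge j}p^{k(s-n)}=\frac{p^{j(s-n)}}{1-p^{s-n}}=\frac{|\xi|_p^{s-n}}{1-p^{s-n}} .
\]
The point $\xi=0$ has Haar measure zero and can be ignored. This gives exactly the constant $\frac{1-p^{-s}}{1-p^{s-n}}$ in Theorem~\ref{thm1.2}. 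All manipulations are of nonnegative quantities, so the identity holds in $[0,+\infty]$, including the case where both sides are infinite.
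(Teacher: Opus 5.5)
Your proof is correct, and it takes a genuinely different route from the paper.

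\emph{The paper's route.} It follows the Euclidean template in three stages:
\begin{enumerate}[i)]
\item It computes $\widehat{K}_s=\frac{1-p^{-s}}{1-p^{s-n}}K_{n-s}$ for $K_s(x)=|x|_p^{-s}$, using the radial-function formula of Lemma~\ref{frf}.
\item It proves the identity for bounded, compactly supported $\varphi$ with $\varphi,\widehat{\varphi}\in L^1$, via $\widehat{|\widehat{\varphi}|^2}=\tilde{\varphi}*\varphi$.
\item It transfers the identity to $\mu$ through the mollifications $\mu_\varepsilon=\phi_\varepsilon*\mu$ with $\phi=\mathbf{1}_{\mathbb{Z}_p^n}$. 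It takes limits on the Fourier side using Lemma~\ref{cdbj}, and on the energy side using a uniform bound on the smoothed kernel followed by dominated convergence (or Fatou when $I_s^p(\mu)=+\infty$).
\end{enumerate}

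\emph{Your route.} You write the kernel as the positive superposition $|z|_p^{-s}=(1-p^{-s})\sum_k p^{ks}\mathbf{1}_{\{|z|_p\le p^{-k}\}}$ of ball indicators. Their Fourier transforms are again scaled ball indicators. Orthogonality of characters on a compact ball then gives, for each $k$, the exact finite identity $(\mu\times\mu)\{|x-y|_p\le p^{-k}\}=p^{-kn}\int_{|\xi|_p\le p^{k}}|\widehat{\mu}(\xi)|^2\,d\xi$. The only infinite operation left is a Tonelli resummation of nonnegative terms.

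\emph{What your route buys.}
\begin{enumerate}[i)]
\item There is no approximation scheme, and none of the delicate limit interchanges (the uniform kernel bound, the domination on the Fourier side).
\item You never need to give meaning to $\widehat{K}_s$, or to Parseval for a non-integrable kernel.
\item The case where both sides are $+\infty$ is handled automatically.
\item Only finiteness of $\mu$ is used; compact support plays no role.
\end{enumerate}
Your argument is essentially the $p$-adic analogue of the Gaussian-subordination proof in $\mathbb{R}^n$, with ball indicators playing the role of Gaussians.

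\emph{What the paper's route buys.} It produces the explicit Fourier transform of the Riesz kernel along the way. It also sets up the mollifiers $\mu_\varepsilon$ of Lemma~\ref{bjhs}, which the paper reuses in its proof of Theorem~\ref{qiuhe}.
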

Also, in $\mathbb{Q}_p^n$ we have a better conclusion which accurately provide the value of $I_s^p(\mu)$ rather than giving approximate estimates.
\begin{theorem}\label{qiuhe}
  Let $\mu\in\mathcal{M}(\mathbb{Z}_p^n)$ and $0<s<n$. Then,
  \[I_s^p(\mu)=\frac{1-p^{-n}}{1-p^{s-n}}\mu(\mathbb{Z}_p^n)^2+\frac{1-p^{-s}}{1-p^{s-n}}\underset{z\in\widehat{\mathbb{Z}}_p^n\setminus \{0\}}{\sum}|\widehat{\mu}(z)|^2|z|_p^{s-n}.\]
\end{theorem}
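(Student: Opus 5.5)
The plan is to start from Theorem~\ref{thm1.2} and evaluate the integral $\int_{\mathbb{Q}_p^n}|\widehat{\mu}(x)|^2|x|_p^{s-n}\,dx$ exactly. The idea is to split $\mathbb{Q}_p^n$ into the cosets of $\mathbb{Z}_p^n$, indexed by $\widehat{\mathbb{Z}}_p^n$ (which I take to be a set of representatives of $\mathbb{Q}_p^n/\mathbb{Z}_p^n$, e.g.\ vectors whose coordinates are of the form $\{x\}_p$), and to show that both $|\widehat{\mu}|^2$ and $|x|_p^{s-n}$ are constant on every coset except $\mathbb{Z}_p^n$ itself. Because the integrand is nonnegative, Tonelli lets me write the integral as the sum over $z\in\widehat{\mathbb{Z}}_p^n$ of the integrals over $z+\mathbb{Z}_p^n$, with no convergence issues. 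The identity holds in $[0,+\infty]$, and both sides may be infinite simultaneously.

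\textbf{Step 1: $\widehat{\mu}$ is constant on cosets.} Since $\operatorname{spt}\mu\subset\mathbb{Z}_p^n$, for $y\in\mathbb{Z}_p^n$ and $x\in\mathbb{Z}_p^n$ the product $yx$ (meaning the sum $\sum_j y_jx_j$) lies in $\mathbb{Z}_p$. Hence $\{yx\}_p=0$ and $\chi(-yx)=1$. Since $\chi$ is a character, $\chi(-(z+y)x)=\chi(-zx)\chi(-yx)=\chi(-zx)$, and therefore $\widehat{\mu}(z+y)=\widehat{\mu}(z)$ for all $y\in\mathbb{Z}_p^n$. In particular $\widehat{\mu}(x)=\widehat{\mu}(0)=\mu(\mathbb{Z}_p^n)$ for every $x\in\mathbb{Z}_p^n$.

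\textbf{Step 2: the norm is constant on nonzero cosets.} If $z\notin\mathbb{Z}_p^n$, then $|z|_p>1\ge|y|_p$ for $y\in\mathbb{Z}_p^n$. By the ultrametric inequality, applied coordinatewise to the max norm, $|z+y|_p=|z|_p$. Each coset has Haar measure $1$, so the coset $z+\mathbb{Z}_p^n$ contributes exactly $|\widehat{\mu}(z)|^2|z|_p^{s-n}$. After multiplying by the prefactor of Theorem~\ref{thm1.2}, these contributions give the series term of the statement.

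\textbf{Step 3: the coset $\mathbb{Z}_p^n$ itself.} On this coset the contribution is $\mu(\mathbb{Z}_p^n)^2\int_{\mathbb{Z}_p^n}|x|_p^{s-n}\,dx$. I decompose $\mathbb{Z}_p^n$ into the shells $\{|x|_p=p^{-k}\}$ for $k\ge0$; the set $\{0\}$ is null. The shell $\{|x|_p=p^{-k}\}$ equals $p^k\mathbb{Z}_p^n\setminus p^{k+1}\mathbb{Z}_p^n$, so its measure is $p^{-kn}(1-p^{-n})$. The integral is therefore
\[
\sum_{k\ge0}(1-p^{-n})\,p^{-kn}p^{-k(s-n)}=(1-p^{-n})\sum_{k\ge0}p^{-ks}=\frac{1-p^{-n}}{1-p^{-s}},
\]
which converges because $s>0$. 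Multiplying by $\frac{1-p^{-s}}{1-p^{s-n}}$ gives exactly the coefficient $\frac{1-p^{-n}}{1-p^{s-n}}$ of $\mu(\mathbb{Z}_p^n)^2$.

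I expect no serious obstacle. The only points that need care are fixing the convention for $\widehat{\mathbb{Z}}_p^n$ as a system of coset representatives and checking that the character identity in Step~1 holds for the paper's pairing $\xi x$ on $\mathbb{Q}_p^n$. Once these are settled, the statement follows by adding the two contributions.
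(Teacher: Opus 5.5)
Your proof is correct, but it takes a different route from the paper. The paper does not invoke Theorem~\ref{thm1.2} here. Instead it:
\begin{enumerate}[i)]
\item mollifies again, $\mu_\varepsilon=\phi_\varepsilon*\mu$ with $\varepsilon\in\mathbb{Z}_p\setminus\{0\}$, so that $\operatorname{spt}\mu_\varepsilon\subset\mathbb{Z}_p^n$;
\item writes $I_s^p(\mu_\varepsilon)=\int_{\mathbb{Z}_p^n}\bigl((\mathbf{1}_{\mathbb{Z}_p^n}K_s)*\mu_\varepsilon\bigr)\mu_\varepsilon\,dx$, using $\mathbb{Z}_p^n-\mathbb{Z}_p^n=\mathbb{Z}_p^n$;
\item applies Parseval's identity for Fourier series on the compact group $\mathbb{Z}_p^n$;
\item computes the Fourier coefficients of the truncated kernel: $\int_{\mathbb{Z}_p^n}|x|_p^{-s}dx=\frac{1-p^{-n}}{1-p^{s-n}}$ at $z=0$, and $C(n,s)|z|_p^{s-n}$ for $z\neq0$;
\item lets $|\varepsilon|_p\to0$.
\end{enumerate}

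You instead obtain the statement as a corollary of Theorem~\ref{thm1.2}. Periodicity of $\widehat{\mu}$ under $\mathbb{Z}_p^n$, together with constancy of $|x|_p$ on the nonzero cosets, makes the integral over $\mathbb{Q}_p^n$ collapse exactly onto a sum over $\mathbb{Q}_p^n/\mathbb{Z}_p^n$. Your constant $C(n,s)\int_{\mathbb{Z}_p^n}|x|_p^{s-n}dx$ coincides with the paper's $\int_{\mathbb{Z}_p^n}|x|_p^{-s}dx$. Your Steps 1 and 2 also show the sum does not depend on the choice of coset representatives, which settles the convention issue you raised.

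What your route buys:
\begin{enumerate}[i)]
\item It is shorter and needs only countable additivity of a nonnegative integral, so the case where both sides are infinite is automatic.
\item It avoids a second limiting argument. In the paper the final step $I_s^p(\mu_\varepsilon)\to I_s^p(\mu)$ is left implicit and relies on the estimates from the proof of Theorem~\ref{thm1.2} anyway.
\item It makes clear where the exactness comes from: $\widehat{\mu}$ is exactly constant on cosets of $\mathbb{Z}_p^n$, which has no analogue in $\mathbb{R}^n$.
\end{enumerate}

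What the paper's route buys: it is intrinsic to the compact group. It identifies the weights as the Fourier coefficients of $\mathbf{1}_{\mathbb{Z}_p^n}K_s$ and uses $\widehat{\mu}$ only on $\widehat{\mathbb{Z}}_p^n$. So it adapts directly to other kernels on $\mathbb{Z}_p^n$ without first establishing a global identity on $\mathbb{Q}_p^n$.
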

We now introduce the definitions of the Fourier dimension of a Borel set $A\subset \mathbb{Q}_p^n$ and of a measure $\mu\in\mathcal{M}(\mathbb{Q}_p^n)$. We write 
$f\lesssim g$ to mean that there exists a constant $C>0$, independent of $f$ and $g$, such that $f\le Cg$.
\begin{definition}\label{fws}
  The Fourier dimension of a Borel set $A\subset \mathbb{Q}_p^n$ is defined by
  \begin{align*}
    \dim_F(A)=\sup\{0\le s\le n:\ \exists\ \mu\in\mathcal{M}(A),\ \text{s.t.}\ |\widehat{\mu}(x)|\lesssim |x|_p^{-\frac{s}{2}},\ \forall\ x\in\mathbb{Q}_p^n\}.
  \end{align*}
  The Fourier dimension of a measure $\mu\in\mathcal{M}(\mathbb{Q}_p^n)$ is defined by
  \begin{align*}
    \dim_F(\mu)=\sup\{s\ge 0:|\widehat{\mu}(x)|\lesssim |x|_p^{-\frac{s}{2}}\ \text{for all}\ x\in\mathbb{Q}_p^n\}.
  \end{align*}
\end{definition}
\begin{remark}
  It should be noted that these definitions differ slightly from the classical ones in $\mathbb{R}^n$, since we need to impose the additional restriction $\dim_F(A)\le n$, which holds automatically in the real setting.
\end{remark}
The following theorem shows that for any Borel set $A\subset\mathbb{Q}_p^n$, the Fourier dimension does not exceed the Hausdorff dimension. Consequently, in analogy with the real case, we define a Salem set in $\mathbb{Q}_p^n$ to be a Borel set $A$ satisfying $\dim_H(A)=\dim_F(A)$.
\begin{theorem}\label{thm1.3}
  For any Borel set $A\subset \mathbb{Q}_p^n$, we have
  \[\dim_F(A)\le \dim_H(A).\]
\end{theorem}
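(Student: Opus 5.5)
The plan is to combine Theorem \ref{thm1.2} with Theorem \ref{hwdl}, in the same way as the classical argument in $\mathbb{R}^n$. If $\dim_F(A)=0$ there is nothing to prove, so assume $\dim_F(A)>0$. Fix any $0<t<\dim_F(A)$. By Definition \ref{fws} there exist $s>t$ with $s\le n$ and a measure $\mu\in\mathcal{M}(A)$ such that
\[|\widehat{\mu}(x)|\le C|x|_p^{-s/2}\quad\text{for all } x\in\mathbb{Q}_p^n .\]
The goal is to show that $I_t^p(\mu)<+\infty$. Theorem \ref{hwdl} then gives $\dim_H(A)\ge t$, and letting $t\uparrow\dim_F(A)$ yields the claim.

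Since $0<t<s\le n$, Theorem \ref{thm1.2} applies with exponent $t$ and gives
\[I_t^p(\mu)=\frac{1-p^{-t}}{1-p^{t-n}}\int_{\mathbb{Q}_p^n}|\widehat{\mu}(x)|^2|x|_p^{t-n}\,dx.\]
Split the integral over $\mathbb{Z}_p^n$ and over $\mathbb{Q}_p^n\setminus\mathbb{Z}_p^n$.

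On $\mathbb{Z}_p^n$ use the trivial bound $|\widehat{\mu}|\le\mu(\mathbb{Q}_p^n)<\infty$. We have
\[\int_{\mathbb{Z}_p^n}|x|_p^{t-n}\,dx=\sum_{k\ge0}p^{-k(t-n)}\,\mathrm{vol}\{|x|_p=p^{-k}\}=\sum_{k\ge0}p^{-k(t-n)}p^{-kn}(1-p^{-n})=\sum_{k\ge0}(1-p^{-n})p^{-kt},\]
which is finite because $t>0$. Here $|x|_p$ denotes the max norm on $\mathbb{Q}_p^n$, and the shell $\{|x|_p=p^{j}\}$ has Haar measure $p^{jn}(1-p^{-n})$.

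On the complement use the decay bound, which reduces the integral to
\[C^2\sum_{j\ge1}p^{j(t-s-n)}\,p^{jn}(1-p^{-n})=C^2(1-p^{-n})\sum_{j\ge1}p^{j(t-s)}.\]
This is finite because $t<s$. Hence $I_t^p(\mu)<\infty$.

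The only real obstacle is bookkeeping. Theorem \ref{thm1.2} requires $0<t<n$, which the choice $t<s\le n$ guarantees. The volume of the shells $\{|x|_p=p^j\}$ must be computed correctly, but this follows from the normalization of the Haar measure on $\mathbb{Z}_p^n$ and its scaling under multiplication by $p^{-j}$. The membership $\mu\in\mathcal{M}(A)$ is exactly what Theorem \ref{hwdl} requires, so no further regularity of $\mu$ is needed.
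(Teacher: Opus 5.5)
Your proof is correct and follows essentially the same route as the paper. You apply Theorem \ref{thm1.2} at an exponent strictly below the decay exponent, bound the low-frequency part trivially and the high-frequency part by the decay hypothesis, and then conclude with Theorem \ref{hwdl}. The only cosmetic difference is that you split at $\mathbb{Z}_p^n$ using $|\widehat{\mu}|\le\mu(\mathbb{Q}_p^n)$, whereas the paper splits at $B_{-k}$ (with $\operatorname{spt}\mu\subset B_k$), where $\widehat{\mu}$ equals $\mu(\mathbb{Q}_p^n)$ exactly.
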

\begin{remark}\label{rmex1}
  Without imposing the condition $s\le n$ in the definition of $\dim_F(A)$, one may obtain $\dim_F(A)>\dim_H(A)$. For example, consider the set $\mathbb{Z}_p^n$ and the measure $\mu$ which is the Haar measure restricted in $\mathbb{Z}_p^n$, then $\dim_F(\mu)=+\infty$, since $\widehat{\mu}(x)=0$ for all $|x|_p>1$. 
\end{remark}
Now, we consider the $p$-adic Riesz products and give their Fourier dimension. Let $p\ge 2$ be a prime. Let $\{a_k\}$ be a sequence in $\mathbb{C}$ with $0<|a_k|\le 1$ and let $\{\ell_k\}$ be a sequence in $\mathbb{N}$ with $\ell_{k+1}>\ell_{k}\ge 1$ for all $k\ge 1$. We define the $p$-adic Riesz products on $\mathbb{Z}_p$ by
\[d\mu_{p,a,\ell}(x)=\prod_{k=1}^{+\infty}\big(1+\mathbf{Re}\  a_k\gamma_{\ell_k}(x)\big)dx,\]
where $\gamma_{\ell_k}(x)=e^{2\pi i\{p^{-\ell_k}x\}_p}$.
\par
In \cite{fz09}, Fan and Zhang conducted a thorough study of $p$-adic Riesz products from the perspectives of harmonic analysis and dynamical systems. The investigation of mutual absolute continuity and singularity for infinite product measures dates back to Kakutani's seminal work \cite{k48}. This topic has subsequently been explored in depth for both deterministic and random Riesz products on the circle by several authors \cite{bm74,f91,f93,ks88,p90}. For the case of random $p$-adic Riesz products, we refer the reader to \cite{sz09}. However, none of the aforementioned results on $p$-adic Riesz products treat the case $p=2$, which is considerably more intricate than the odd prime cases.
\par
We now give the Fourier dimension of $\mu_{p,a,\ell}$ for all $p\ge 2$ and any sequence $\{a_k\}$ and $\{\ell_k\}$. 
\begin{theorem}\label{thm1.5}
  Let $p\ge 2$ be a prime. Let $\{a_k\}$ be a sequence in $\mathbb{C}$ with $0<|a_k|\le 1$ and let $\{\ell_k\}$ be a sequence in $\mathbb{N}$ with $\ell_{k+1}>\ell_{k}\ge 1$ for all $k\ge 1$. Then,
  \[\dim_F(\mu_{p,a,\ell})= 2\underset{n\to +\infty}{\varliminf}\frac{\log(|a_n|)}{-\ell_n \log(p)}.\]
\end{theorem}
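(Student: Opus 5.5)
We prove the two inequalities $\dim_F(\mu_{p,a,\ell})\ge 2\alpha$ and $\dim_F(\mu_{p,a,\ell})\le 2\alpha$ separately, where $\alpha:=\varliminf_{n\to+\infty}\frac{\log|a_n|}{-\ell_n\log p}$. Write $\mu=\mu_{p,a,\ell}$. For $N\ge 1$ put $P_N(x)=\prod_{k=1}^N\big(1+\mathbf{Re}\,a_k\gamma_{\ell_k}(x)\big)$, so that $\mu$ is the weak limit of $P_N\,dx$. Since $P_N\ge 0$ and $\int_{\mathbb{Z}_p}P_N\,dx=1$, we have $|\widehat{P_N}(\eta)|\le 1$ for every $\eta$.

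Since $\mu$ lives on $\mathbb{Z}_p$, $\widehat{\mu}$ only needs to be understood on $\widehat{\mathbb{Z}}_p\cong\mathbb{Q}_p/\mathbb{Z}_p$. Write $2\,\mathbf{Re}\,a\gamma_\ell=a\gamma_\ell+\bar a\gamma_{\ell}^{-1}$. Then $P_N$ is a trigonometric polynomial whose frequencies are the sums $\sum_{k\le N}\varepsilon_kp^{-\ell_k}$ with $\varepsilon_k\in\{-1,0,1\}$. Because the $\ell_k$ are strictly increasing, the ultrametric inequality gives that such a sum has $|\cdot|_p=p^{\ell_K}$ exactly, where $K$ is the largest index with $\varepsilon_k\ne 0$. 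This holds also for $p=2$, since only the top term has the maximal norm. Hence $\widehat{\mu}(\xi)=0$ unless $|\xi|_p\in\{1\}\cup\{p^{\ell_K}:K\ge1\}$.

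Now fix $\xi$ with $|\xi|_p=p^{\ell_K}$. Factor $P_N=P_{K-1}\cdot(1+\mathbf{Re}\,a_K\gamma_{\ell_K})\cdot T$, where every nonzero frequency of $T$ has norm $\ge p^{\ell_{K+1}}$. By the ultrametric property only the constant term of $T$ can contribute at $\xi$, and $\widehat{P_{K-1}}(\xi)=0$ because $|\xi|_p>p^{\ell_{K-1}}$. Therefore
\[\widehat{\mu}(\xi)=\tfrac{a_K}{2}\widehat{P_{K-1}}(\xi-p^{-\ell_K})+\tfrac{\bar a_K}{2}\widehat{P_{K-1}}(\xi+p^{-\ell_K}),\qquad\text{so}\qquad |\widehat{\mu}(\xi)|\le |a_K|.\]
If $s<2\alpha$, then $|a_K|\lesssim p^{-\ell_Ks/2}=|\xi|_p^{-s/2}$ for all $K$, so $\dim_F(\mu)\ge 2\alpha$. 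This direction works uniformly in $p$, including $p=2$.

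The lower bound $\dim_F(\mu)\le 2\alpha$ is the main obstacle. For odd $p$ one can simply evaluate at $\xi=p^{-\ell_K}$: the second term vanishes because $2p^{-\ell_K}$ has norm $p^{\ell_K}>p^{\ell_{K-1}}$, which gives $|\widehat{\mu}(p^{-\ell_K})|=|a_K|/2$. For $p=2$, however, $2\cdot 2^{-\ell_K}$ may coincide with a lower frequency, and cancellation between the two terms is possible. To handle all $p$ at once I would use a test function instead. Let $h(x)=\operatorname{sgn}\,\mathbf{Re}\big(a_K\gamma_{\ell_K}(x)\big)$, which depends only on $x \bmod p^{\ell_K}$. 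On each coset of $p^{\ell_K-1}\mathbb{Z}_p$, $\gamma_{\ell_K}$ runs over a rotated set of $p$-th roots of unity; for $p=2$ these are two antipodal values, so $h$ averages to zero there. (For odd $p$ this vanishing needs a small adjustment of $h$, but the odd case is already settled above.) Hence $\widehat h$ is supported on the shell $|\xi|_p=p^{\ell_K}$, and also $\int hP_{K-1}T\,dx=0$. On the other hand,
\[\int h\,d\mu=\int|\mathbf{Re}\,a_K\gamma_{\ell_K}|\,P_{K-1}\,dx=\int|\mathbf{Re}\,a_K\gamma_{\ell_K}|\,dx\ \ge\ c\,|a_K|,\]
where the middle equality holds because $P_{K-1}$ is constant on cosets of $p^{\ell_{K-1}}\mathbb{Z}_p$, over which $|\mathbf{Re}\,a_K\gamma_{\ell_K}|$ has constant average.

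By Parseval on the finite group $\mathbb{Z}_p/p^{\ell_K}\mathbb{Z}_p$, $\int h\,d\mu=\sum_{|\xi|_p=p^{\ell_K}}\overline{\widehat h(\xi)}\,\widehat{\mu}(\xi)$. Since $h$ is a step function in the angle of $\gamma_{\ell_K}$, its coefficients decay like $1/j$, so $\sum|\widehat h(\xi)|\lesssim \ell_K$. It follows that $\sup_{|\xi|_p=p^{\ell_K}}|\widehat{\mu}(\xi)|\gtrsim |a_K|/\ell_K$. Because $\log\ell_K/\ell_K\to 0$, the factor $\ell_K$ does not change the exponent. Taking a subsequence of $K$ realising the $\varliminf$ then shows that any $s$ with $|\widehat{\mu}(\xi)|\lesssim|\xi|_p^{-s/2}$ must satisfy $s\le 2\alpha$. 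The details I expect to need the most care are the vanishing of $\int hP_{K-1}T\,dx$ and the $\ell_1$ bound on $\widehat h$.
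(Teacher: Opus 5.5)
Your proof of $\dim_F(\mu)\ge 2\alpha$ is correct. The two-term formula on $S_{\ell_K}$ and the bound $|\widehat{\mu}(\xi)|\le |a_K|$ are exactly the paper's estimate on $S_{\ell_n}$, and your ultrametric argument works uniformly in $p$. Your upper bound for odd $p$ is the paper's Proposition~\ref{rpfws1}.

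The gap is in $\dim_F(\mu)\le 2\alpha$ for $p=2$, precisely in the case $\ell_K=\ell_{K-1}+1$ that makes $p=2$ hard. There, a coset of $2^{\ell_{K-1}}\mathbb{Z}_2$ meets only two values $\pm\omega$ of $\gamma_{\ell_K}$. So the coset average of $|\mathbf{Re}\,a_K\gamma_{\ell_K}|$ is $|\mathbf{Re}\,a_K\omega|$, which depends on the coset. The equality $\int|\mathbf{Re}\,a_K\gamma_{\ell_K}|P_{K-1}\,dx=\int|\mathbf{Re}\,a_K\gamma_{\ell_K}|\,dx$ is false, and no bound $\ge c|a_K|$ survives.

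Concretely, take $\ell_k=k$, $a_1=\dots=a_{K-1}=1$ and $a_K=i$. Then $P_{K-1}=2^{K-1}\mathbf{1}_{2^{K-1}\mathbb{Z}_2}$. On $2^{K-1}\mathbb{Z}_2$ one has $\gamma_K=\pm 1$, hence $\mathbf{Re}\,a_K\gamma_K=0$ and $P_K=P_{K-1}$. So $\int h\,d\mu=0$, and in fact $\widehat{\mu}$ vanishes identically on $S_{\ell_K}$ although $|a_K|=1$. No test function can therefore give $\sup_{S_{\ell_K}}|\widehat{\mu}|\gtrsim |a_K|/\ell_K$ for every $K$, and your final step needs exactly that.

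The missing idea is a dichotomy, and your own two-term formula already supplies it. Note that $\widehat{P_{K-1}}=\widehat{\mu}$ at frequencies of norm at most $2^{\ell_{K-1}}$. Evaluating at $\xi=2^{-\ell_K}$ then gives $\widehat{\mu}(2^{-\ell_K})=\frac{a_K}{2}+\frac{\bar a_K}{2}\widehat{\mu}(2^{-\ell_{K-1}})$ when $\ell_K=\ell_{K-1}+1$, and $\widehat{\mu}(2^{-\ell_K})=\frac{a_K}{2}$ otherwise. Hence for every $K$, either $|\widehat{\mu}(2^{-\ell_K})|\ge |a_K|/4$ or $|\widehat{\mu}(2^{-\ell_{K-1}})|>1/2$.

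If the second alternative occurs for infinitely many $K$, then $\widehat{\mu}$ does not tend to $0$ as $|\xi|_2\to\infty$. In that case $\dim_F(\mu)=0\le 2\alpha$. Otherwise the first alternative holds for all large $K$, and your odd-$p$ argument goes through verbatim. This is the mechanism behind the paper's Lemmas~\ref{lbof} and \ref{ybfs}: a small value at one index forces a good lower bound at the next. The paper then organises it through a case analysis on $\{a_n\}$ in Proposition~\ref{flyws2}.
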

\begin{remark}
  In computing the Fourier dimension of $\mu_{p,a,\ell}$, two natural perspectives arise. On the one hand, one may regard $\mu_{p,a,\ell}$ as a measure supported on $\mathbb{Z}_p$, in which case it suffices to consider the Fourier transform $\widehat{\mu}_{p,a,\ell}(\xi)$ for $\xi\in\widehat{\mathbb{Z}}_p$. On the other hand, $\mu_{p,a,\ell}$ may be viewed as a measure on $\mathbb{Q}_p$ with $\operatorname{spt}\mu_{p,a,\ell}=\mathbb{Z}_p$, requiring the evaluation of $\widehat{\mu}_{p,a,\ell}(\xi)$ for all $\xi\in\mathbb{Q}_p$. These two viewpoints yield the same Fourier dimension. Consequently, throughout the proof of our theorem, we treat $\mu_{p,a,\ell}$ as a measure on $\mathbb{Z}_p$.
\end{remark}
For two measures $\mu$ and $\nu$, we write $\mu\sim\nu$ to  indicate that they are mutually absolutely continuous. As an easy corollary, we obtain a sufficient condition ensuring that two $p$-adic Riesz products are mutually absolutely continuous.
\begin{corollary}\label{jdlx}
  Let $p\ge 3$. Suppose $\mu_{p,a,\ell}$ and $\mu_{p,b,\ell}$ are two $p$-adic Riesz products with positive Fourier dimension. Then $\mu_{p,a,\ell} \thicksim \mu_{p,b,\ell}$.
\end{corollary}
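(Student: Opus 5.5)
Positive Fourier dimension together with Theorem \ref{thm1.5} gives a geometric decay of the coefficients, and for $p\ge 3$ this summability is exactly the input of a Kakutani-type dichotomy. Concretely, if $\dim_F(\mu_{p,a,\ell})=2\varliminf_{n}\frac{\log|a_n|}{-\ell_n\log p}=d>0$, then for some $\delta\in(0,d/2)$ and all large $n$ we have $|a_n|\le p^{-\delta\ell_n}$. Since $\ell_n\ge n$ (strictly increasing positive integers), this gives $|a_n|\le p^{-\delta n}$ eventually. Hence $\sum_k|a_k|^2<+\infty$, and likewise $\sum_k|b_k|^2<+\infty$ and $\sum_k|a_k-b_k|^2<+\infty$.

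The plan is to show that each of $\mu_{p,a,\ell}$ and $\mu_{p,b,\ell}$ is equivalent to the Haar measure $dx$ on $\mathbb{Z}_p$. Equivalence is transitive, so this gives $\mu_{p,a,\ell}\sim\mu_{p,b,\ell}$. For odd $p$ the characters $\gamma_{\ell_k}$ and their conjugates have distinct frequencies $\pm p^{-\ell_k}$. The nonzero frequencies $\sum_k\epsilon_k p^{-\ell_k}$ with $\epsilon_k\in\{-1,0,1\}$ are therefore all distinct in $\widehat{\mathbb{Z}}_p$, because the digits $\pm1$ never collide when $p\ge 3$. This is the dissociation property, and it is the step where $p\ge3$ is used. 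It follows that the partial products $P_N(x)=\prod_{k\le N}(1+\mathbf{Re}\,a_k\gamma_{\ell_k}(x))$, which are nonnegative because $|a_k|\le1$, form a martingale with respect to the filtration generated by the $p^{-\ell_N}$-balls. This makes them a martingale under Haar measure with mean $1$. Moreover $P_N\,dx\to\mu_{p,a,\ell}$ weakly.

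To get absolute continuity, I would use Theorem \ref{qiuhe}-style Parseval on $\mathbb{Z}_p$. The Fourier coefficients of $P_N$ are products of $a_k/2$ or $\overline{a_k}/2$ over the distinct dissociated frequencies. This gives
\[
\|P_N\|_{L^2}^2=\prod_{k\le N}\Bigl(1+\tfrac{|a_k|^2}{2}\Bigr),
\]
which stays bounded because $\sum|a_k|^2<\infty$. So the martingale is bounded in $L^2$. It therefore converges in $L^2$ to a density $f$ with $\mu_{p,a,\ell}=f\,dx$, giving $\mu_{p,a,\ell}\ll dx$.

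For the converse $dx\ll\mu_{p,a,\ell}$, I need $f>0$ almost everywhere. The plan is Kakutani's argument: $\log(1+\mathbf{Re}\,a_k\gamma_{\ell_k})$ is bounded for large $k$ since $|a_k|\to0$, so that
\[
\log(1+u)=u-\tfrac{u^2}{2}+O(|u|^3),\qquad u=\mathbf{Re}\,a_k\gamma_{\ell_k}.
\]
The sum $\sum_k\mathbf{Re}\,a_k\gamma_{\ell_k}$ converges a.e. and in $L^2$ as a series of orthogonal terms with square-summable coefficients. The series of quadratic terms converges absolutely. Hence $\log P_N$ converges a.e. to a finite limit once the finitely many initial factors are discarded. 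Those initial factors vanish only on a null set, since $1+\mathbf{Re}\,a\gamma$ is a nonzero trigonometric polynomial. Therefore $f>0$ a.e., and $\mu_{p,a,\ell}\sim dx$.

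The main obstacle is this positivity step and the dissociation bookkeeping. An alternative that avoids it is to apply Kakutani's criterion directly to the two product measures: the Hellinger integrals $\int\sqrt{(1+\mathbf{Re}\,a_k\gamma)(1+\mathbf{Re}\,b_k\gamma)}\,dx$ equal $1-O(|a_k-b_k|^2)$, and their product is positive because $\sum_k|a_k-b_k|^2<\infty$. I would fall back on this formulation if the martingale route becomes messy. Either way it relies on the independence structure of the $\gamma_{\ell_k}$ under Haar measure, which requires $p\ge 3$.
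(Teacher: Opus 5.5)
\textbf{The gap: the initial factors can vanish on balls.} The step that fails is your claim that the finitely many initial factors vanish only on a null set because $1+\mathbf{Re}\,a\gamma$ is a nonzero trigonometric polynomial. That is true on the circle, but false on $\mathbb{Z}_p$. There, $\gamma_{\ell_k}$ is constant on cosets of $p^{\ell_k}\mathbb{Z}_p$ and takes every $p^{\ell_k}$-th root of unity on such a coset. So $1+\mathbf{Re}\,a_k\gamma_{\ell_k}$ is locally constant, and whenever $-a_k$ is a $p^{\ell_k}$-th root of unity it vanishes on a whole ball of Haar measure $p^{-\ell_k}$. For example, $a_k=-1$ gives the zero set $p^{\ell_k}\mathbb{Z}_p$. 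In that case $f=0$ on a set of positive measure, $\mu_{p,a,\ell}\not\sim dx$, and the transitivity plan collapses. Your Kakutani fallback has the same blind spot, because Kakutani's theorem needs each pair of factor measures to be equivalent, and that fails exactly here. It also rests on independence, which the $\gamma_{\ell_k}$ do not have: $\gamma_{\ell_k}=\gamma_{\ell_{k+1}}^{p^{\ell_{k+1}-\ell_k}}$. What $p\ge 3$ gives you is dissociation of the frequencies, not independence.

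\textbf{The statement itself fails here.} This gap cannot be patched, because it is exactly where the statement breaks. Take $a_1=-1$, $b_1=1$ and $a_k=b_k=p^{-\ell_k}$ for $k\ge 2$. By Theorem \ref{thm1.5} both measures have Fourier dimension $2$. Write $\mathbf{1}_{p^{\ell_1}\mathbb{Z}_p}(x)=p^{-\ell_1}\sum_{\xi\in\widehat{\mathbb{Z}}_p,\,|\xi|_p\le p^{\ell_1}}\chi(\xi x)$ and pair it with \eqref{rpfxs}. For $c\in\{a,b\}$ this gives $\mu_{p,c,\ell}(p^{\ell_1}\mathbb{Z}_p)=p^{-\ell_1}(1+\mathbf{Re}\,c_1)$. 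That is $0$ for $c=a$ and $2p^{-\ell_1}$ for $c=b$, so $\mu_{p,b,\ell}\not\ll\mu_{p,a,\ell}$.

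\textbf{The paper's route misses it too.} The paper verifies the hypotheses of Proposition \ref{rpac}, and those hypotheses as quoted hold in this example, since only $k=1$ has $a_k\neq b_k$. So that criterion, and hence the corollary, implicitly needs a non-vanishing assumption on the factors.

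\textbf{What your argument does prove.} Apart from this step, your argument is sound, and it is more self-contained than the appeal to Proposition \ref{rpac}. Positive Fourier dimension actually gives $\sum_k|a_k|<\infty$. Choose $K$ with $|a_k|<1$ for $k\ge K$. Then the tail product converges uniformly to a continuous function bounded below by $\prod_{k\ge K}(1-|a_k|)>0$. Hence $\mu_{p,a,\ell}\sim dx$ restricted to the complement of the clopen set $Z_a$ on which one of the finitely many degenerate factors vanishes. This proves the corollary under the extra hypothesis that no factor vanishes, for instance $|a_k|,|b_k|<1$ for all $k$. In general it shows that $\mu_{p,a,\ell}\sim\mu_{p,b,\ell}$ if and only if $Z_a=Z_b$.
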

We organize this paper as follows. In Section 2, we provide some preliminaries of $p$-adic analysis and $p$-adic Riesz products. In Section 3, we prove Theorems \ref{hwdl}, \ref{thm1.2} and \ref{thm1.3} which extend the theories of Fourier dimension on $\mathbb{R}^n$ to $\mathbb{Q}_p^n$. In Section 4, we prove Theorem \ref{thm1.5}.
\section{Preliminaries}
\subsection{$P$-adic numbers and integration theory}
Let $p\ge 2$ be a prime. Any nonzero rational number $r$ can be written as $r=p^v\frac{a}{b}$ where $v,a,b\in\mathbb{Z}$ and ${\rm gcd}(p,a)={\rm gcd}(p,b)=1$. Define $v_p(r)=v$ and $|r|_p=p^{-v_p(r)}$ for $r\neq 0$ and $|0|_p=0$. Then $|\cdot |_p$ is a non-Archimedean absolute value on $\mathbb{Q}$. The completion of $\mathbb{Q}$ under the absolute value $|\cdot |_p$ is the field $\mathbb{Q}_p$ of \textit{$p$-adic numbers}. It is known that, any $x\in\mathbb{Q}_p$ has a unique expansion as 
\[x=\sum_{i=v_p(x)}^{+\infty}x_ip^i\  \text{with}\ x_i\in\{0,1,\cdots, p-1\}\ \text{and}\ x_{v_p(x)}\neq 0.\]
Let $\mathbb{Q}_p^n$ be the $n$-th Cartesian product of $\mathbb{Q}_p$. The norm on $\mathbb{Q}_p^n$ is defined by 
$$|(x_1,\cdots ,x_n)|_p=\max\{|x_i|_p\}.$$ 
The ring of $p$-adic integer $\mathbb{Z}_p^n$ is defined by $$\mathbb{Z}_p^n=\{x\in\mathbb{Q}_p^n:|x|_p\le 1\}.$$
We denote by
\[B(x_0,r):=\{x\in\mathbb{Q}_p^n: |x-x_0|_p\le r\}\ \text{and}\ S(x_0,r):=\{x\in\mathbb{Q}_p^n: |x-x_0|_p= r\}\]
the ball and sphere centered at $x_0$ with radius $r$ respectively. We also denote $B(0,p^m)$ by $B_m$ and $S(0,p^m)$ by $S_m$ for convenience.
\par
Since $\mathbb{Q}_p^n$ is a locally compact commutative group with respect to addition, there exists a unique Haar measure $dx$, with
\[\int_{\mathbb{Z}_p^n}1dx=1.\]
For any complex valued function $f$ on $\mathbb{Q}_p^n$, we define the integral of $f$ by
\[\int_{\mathbb{Q}_p^n}fdx:=\lim_{m\to +\infty}\int_{B_m}fdx=\sum_{m\in\mathbb{Z}}\int_{S_m}fdx,\]
if the limits (equivalently the summation) exists. In this case, we say that $f$ is \textit{integrable on $\mathbb{Q}_p^n$}. It can be proved that Fubini's theorem, 
dominated convergence theorem and Fatou's lemma also hold in $\mathbb{Q}_p^n$. We refer to \cite{aks10, vvz94} for more details. 
\subsection{$P$-adic Fourier analysis}
For any $x\in\mathbb{Q}_p$, define the fractional part of $x$ by 
\begin{align}\label{def of fp}
  \{x\}_p=\begin{cases}
    0, & x\in \mathbb{Z}_p, \\
    \sum\limits_{i=v_p(x)}^{-1}x_ip^i, & \text{otherwise}.
  \end{cases}
\end{align}
The following function
\[\chi(x)=e^{2\pi i\{x\}_p}\]
defines a non-trivial continuous additive character of the additive group $\mathbb{Q}_p$. Notably $\chi\equiv 1$ on $\mathbb{Z}_p$ but non-constant on the subgroup $p^{-1}\mathbb{Z}_p$. The Fourier transform of a complex valued function $f\in L^1(\mathbb{Q}_p^n)$ is defined by
\[\widehat{f}(\xi)=\int f(x)\chi(-\xi x)dx,\ \forall\  \xi\in\mathbb{Q}_p^n.\]
If $f,\ \widehat{f}\in L^1(\mathbb{Q}_p^n)$, then we have the \textit{inversion formula} 
\[f(x)=\int \widehat{f}(\xi)\chi(\xi x)d\xi,\ \forall x\in \mathbb{Q}_p^n.\]
For any $\xi\in\mathbb{Q}_p^n$, the following integrals about $\chi$ is well known,
\begin{align*}
  \int_{B_k}\chi(-\xi x)dx=
  \begin{cases}
    p^{nk}, & |\xi|_p\le p^{-k},\\
    0, & |\xi|_p\ge p^{-k+1},
  \end{cases}
\end{align*}
\begin{align*}
  \int_{S_k}\chi(-\xi x)dx=
  \begin{cases}
    p^{nk}(1-p^{-n}), & |\xi|_p\le p^{-k},\\
    -p^{n(k-1)}, & |\xi|_p= p^{-k+1},\\
    0, & |\xi|_p\ge p^{-k+2}.
  \end{cases}
\end{align*}
We also give the Fourier transform of some radial functions.
\begin{lemma}\cite[p43. Example 8]{vvz94}\label{frf}
  Let $f:\mathbb{Q}_p^n\rightarrow \mathbb{C}$ be a radial function, i.e., there exists $g: [0,+\infty)\rightarrow\mathbb{C}$, such that $f(x)=g(|x|_p)$ for all $x\in\mathbb{Q}_p^n$. Suppose 
  \[\sum_{\gamma=0}^{+\infty}g(p^{-\gamma})p^{-n\gamma}<+\infty.\]
  Then
  \[\widehat{f}(\xi)=|\xi|^{-n}_p\left((1-p^{-n})\sum_{\gamma=0}^{+\infty}g(p^{-\gamma}|\xi|_p^{-1})p^{-n\gamma}-g(p|\xi|_p^{-1})\right),\ \forall \xi\neq 0.\]
\end{lemma}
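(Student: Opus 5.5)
The plan is to decompose $\mathbb{Q}_p^n\setminus\{0\}$ into the spheres $S_k$, $k\in\mathbb{Z}$, on each of which $f$ is constant, and then to use the formula for $\int_{S_k}\chi(-\xi x)\,dx$ recorded just above the lemma. Since $f$ is radial, we have $f=\sum_{k\in\mathbb{Z}} g(p^k)\mathbf{1}_{S_k}$ off the origin, which is a null set. Fix $\xi\neq 0$ and write $|\xi|_p=p^{j}$ with $j\in\mathbb{Z}$. Following the convention for integrals in the preliminaries, I interpret $\widehat f(\xi)$ as $\sum_{k\in\mathbb{Z}}\int_{S_k}f(x)\chi(-\xi x)\,dx$.

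With this interpretation, each term equals $g(p^k)\int_{S_k}\chi(-\xi x)\,dx$. The sphere formula shows that this term vanishes when $k\ge -j+2$, because then $|\xi|_p\ge p^{-k+2}$. It equals $-g(p^{-j+1})p^{-nj}$ when $k=-j+1$. It equals $(1-p^{-n})g(p^k)p^{nk}$ when $k\le -j$. The contributions from large spheres are therefore identically zero, so no integrability of $f$ at infinity is needed, and the limit over $B_m$ stabilizes once $m\ge -j+1$.

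For the remaining tail $k\le -j$, I substitute $k=-j-\gamma$ with $\gamma\ge 0$. Then $p^{nk}=|\xi|_p^{-n}p^{-n\gamma}$ and $p^k=p^{-\gamma}|\xi|_p^{-1}$, so the tail becomes $|\xi|_p^{-n}(1-p^{-n})\sum_{\gamma\ge0}g(p^{-\gamma}|\xi|_p^{-1})p^{-n\gamma}$. The term with $k=-j+1$ is $-|\xi|_p^{-n}g(p|\xi|_p^{-1})$. Adding the two gives the stated formula.

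The main obstacle is justifying the interchange of the sum over spheres inside the ball $B_{-j}$ with the integral. Equivalently, one must check that $f$ is integrable near $0$ and that the series in the formula converges. This is governed by the hypothesis $\sum_{\gamma\ge0}g(p^{-\gamma})p^{-n\gamma}<+\infty$, since $\int_{S_{-\gamma}}|f|\,dx=(1-p^{-n})|g(p^{-\gamma})|p^{-n\gamma}$. I would read the hypothesis as absolute convergence, which is automatic when $g\ge0$, the case relevant later for $|x|_p^{s-n}$. With that reading, $f\in L^1(B_0)$ and hence $f\in L^1(B_{-j})$, because $B_{-j}$ differs from $B_0$ by finitely many spheres on which $f$ is bounded. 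Countable additivity of the integral, or dominated convergence, then justifies the termwise summation. If the hypothesis is only meant as conditional convergence, the same computation still works with $\widehat f(\xi)$ defined as the limit of the integrals over the annuli $B_{-j}\setminus B_{-j-N}$ as $N\to\infty$. The finitely many spheres between $B_0$ and $B_{-j}$ only shift the index of the series and do not affect its convergence.
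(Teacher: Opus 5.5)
Your argument is correct. The paper gives no proof of this lemma and only cites Vladimirov--Volovich--Zelenov (p.~43, Example~8), where the formula is obtained exactly as you do it: sum $g(p^k)\int_{S_k}\chi(-\xi x)\,dx$ over the spheres, with the integral read as the improper limit $\lim_{m\to\infty}\int_{B_m}$, so that the spheres with $k\ge -j+2$ drop out and the hypothesis only has to control the small spheres. Your explicit choice to read $\widehat f$ as this improper integral, rather than as an $L^1$ Fourier transform, is the right one, since the paper applies the lemma to $K_s(x)=|x|_p^{-s}$, which is not in $L^1(\mathbb{Q}_p^n)$.
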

\par
The \textit{convolution} $f*g$ of functions $f$ and $g$ is defined by 
\[(f*g)(x)=\int f(x-y)g(y)dy,\]
and the convolution of a function $f$ and a Borel measure $\mu$ by 
\[(f*\mu)(x)=\int f(x-y)d\mu(y),\]
whenever the integrals exist.
\par
A complex valued function $f$ defined on $\mathbb{Q}_p^n$ is called \textit{uniformly locally constant} if there exists $k\in\mathbb{Z}$, such that
\[f(x+u)=f(x),\ \text{for all}\ x\in\mathbb{Q}_p^n,\ \text{for all}\ u\in B_k.\]
In Fourier analysis on $\mathbb{R}^n$, the Schwartz class of rapidly decreasing functions plays a central and convenient role. However, in the $p$-adic setting $\mathbb{Q}_p^n$, since differentiability cannot be defined in the same way as on $\mathbb{R}^n$, a different class of test functions is required. In the theory of Bruhat-Schwartz distributions, the space of \textit{Bruhat-Schwartz test functions} consists, by definition, of uniformly locally constant functions with compact support. The following proposition illustrates, to some extent, why these functions serve as a natural substitute for the classical Schwartz functions.
\begin{proposition}\cite[Proposition 2.2]{ffls19}\label{jbcz}
  Let $f\in L^1(\mathbb{Q}_p^n)$ be a complex valued integrable function. If $f$ is uniformly locally constant, then $\widehat{f}$ has compact support.
\end{proposition}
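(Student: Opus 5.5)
The plan is to prove the contrapositive of the defining property: if $f$ is uniformly locally constant with parameter $k$, then $\widehat{f}(\xi)=0$ whenever $|\xi|_p\ge p^{-k+1}$. Since the closed ball $B(0,p^{-k})$ is compact, this gives compact support of $\widehat f$. Fix $k\in\mathbb{Z}$ with $f(x+u)=f(x)$ for all $x\in\mathbb{Q}_p^n$ and all $u\in B_k$.

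First I exploit translation invariance of the Haar measure. For any $u\in B_k$, substituting $x\mapsto x+u$ gives
\[\widehat f(\xi)=\int f(x+u)\chi(-\xi(x+u))dx=\chi(-\xi u)\,\widehat f(\xi).\]
This uses only $f\in L^1(\mathbb{Q}_p^n)$, so the integral converges absolutely and is translation invariant. Here $\xi u$ denotes $\sum_i\xi_iu_i$, and I use that $\chi$ is an additive character.

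Second, I average this identity over $u\in B_k$, i.e.\ integrate both sides in $u$ over $B_k$ and divide by its measure $p^{nk}$:
\[\widehat f(\xi)=p^{-nk}\,\widehat f(\xi)\int_{B_k}\chi(-\xi u)\,du.\]
The displayed formula for $\int_{B_k}\chi(-\xi x)dx$ in the preliminaries says the right-hand integral vanishes when $|\xi|_p\ge p^{-k+1}$. Hence $\widehat f(\xi)=0$ for such $\xi$, which is the claim.

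Alternatively one can avoid averaging. When $|\xi|_p\ge p^{-k+1}$, pick a single $u\in B_k$ with $\chi(-\xi u)\ne1$; then $(1-\chi(-\xi u))\widehat f(\xi)=0$ forces $\widehat f(\xi)=0$. Such a $u$ exists because $u\mapsto\chi(-\xi u)$ is a nontrivial character on $B_k$, which is again the content of the vanishing integral. There is no real obstacle here. The only point needing care is that the character sum over $B_k$ is exactly the stated formula for the norm $|\cdot|_p=\max_i|\cdot|_p$, and that formula is already recorded in the paper.
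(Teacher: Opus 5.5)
Your argument is correct and complete. Translation invariance gives $\widehat f(\xi)=\chi(-\xi u)\widehat f(\xi)$ for every $u\in B_k$, and $\int_{B_k}\chi(-\xi u)\,du=0$ whenever $|\xi|_p\ge p^{-k+1}$, so $\widehat f$ vanishes outside the compact ball $B_{-k}$.

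The paper gives no proof of its own and only cites \cite{ffls19}, so there is no in-paper argument to compare against; what you give is the standard proof. Your averaging step can equivalently be written as $f=\phi_\varepsilon * f$, where $\phi_\varepsilon$ is the mollifier of Lemma \ref{bjhs} with $|\varepsilon|_p=p^k$. Taking Fourier transforms then gives $\widehat f=\widehat f\cdot\mathbf{1}_{B_{-k}}$ directly.
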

We refer to \cite{aks10, t75, vvz94} for more details about Bruhat-Schwartz distributions. 
\subsection{$P$-adic Riesz products}
Recall that the Fourier coefficients of a measure $\mu$ on $\mathbb{Z}_p$ are defined by 
$$\widehat{\mu}(\xi)=\int_{\mathbb{Z}_p}\chi(-\xi x)d\mu(x),\ \forall \xi\in \widehat{\mathbb{Z}}_p.$$ 
If $p\ge 3$ or $p=2$ and $\ell_{i+1}-\ell_{i}\ge 2$ for all $i\ge 1$, it can be proved that 
\begin{align}\label{rpfxs}
  \widehat{\mu}_{p,a,\ell}(\xi)=
  \begin{cases}
    a_1^{(\varepsilon_1)}\cdots a_n^{(\varepsilon_n)}, & \text{if}\ \xi=\sum\limits_{i=1}^{n}\varepsilon_i p^{-\ell_i},\\
    0, & \text{otherwise},
  \end{cases}
\end{align}
where $a^{(\varepsilon)}$ equals to $1, \frac{a}{2},\ \text{or}\ \frac{\bar{a}}{2}$ according to $\varepsilon = 0, 1 \ \text{or}\ -1$. Moreover, if $\ell_{i_0+1}\ge \ell_{i_{0}}+2$ for some $i_{0}$ (we allow $\ell_{i+1}=\ell_{i}$ for $i\neq i_0$), then 
$\widehat{\mu}_{p,a,\ell}(2^{-\ell_{i_0+1}})=\frac{a_{i_0+1}}{2}$. 
The following proposition provides a sufficient condition under which two $p$-adic Riesz products are mutually absolutely continuous. 
\begin{proposition}\cite[Theorem 1.1]{wz13}\label{rpac}
  Let $p\ge 3$. Suppose there exists some $c>0$ such that $p^{\ell_{k}-\ell_{k-1}}(2-|a_k+b_k|)^2>c$ for all large $k$ with $a_k\neq b_k$. Then $\mu_{p,a,\ell}\thicksim \mu_{p,b,\ell}$ if 
  \[\sum_{k=1}^{+\infty}|a_k-b_k|^2\left(1+\frac{\cos ^2(s_k-t_k)}{\sqrt{2-|a_k+b_k|}}\right)<+\infty,\]
  where $s_k=\arg(a_k-b_k),\ t_k=\arg(a_k+b_k)$.
\end{proposition}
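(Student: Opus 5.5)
The plan is to treat the two Riesz products through the martingale version of Kakutani's dichotomy, with the filtration generated by residue classes modulo $p^{\ell_k}$. Let $\mathcal{F}_k$ be the $\sigma$-algebra generated by the balls $x+p^{\ell_k}\mathbb{Z}_p$. Since $\gamma_{\ell_j}(x)$ depends only on $x \bmod p^{\ell_j}$, the partial products $P_N^a=\prod_{k\le N}(1+\mathbf{Re}\,a_k\gamma_{\ell_k})$ are $\mathcal{F}_N$-measurable.

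\textbf{Step 1 (restrictions to $\mathcal{F}_N$).} Using the Fourier coefficient formula (\ref{rpfxs}), which applies since $p\ge 3$, the tail product integrates over any ball of $\mathcal{F}_N$ to exactly its Haar measure. Indeed, every nonzero frequency $\sum_{i>N}\varepsilon_i p^{-\ell_i}$ has absolute value $>p^{\ell_N}$, so it integrates to zero on such a ball. Hence $\mu_{p,a,\ell}|_{\mathcal{F}_N}=P_N^a\,dx$, and the likelihood ratio $X_N=P_N^b/P_N^a$ is a $\mu_{p,a,\ell}$-martingale. It is well defined once the finitely many $k$ with $|a_k|=1$ are handled separately.

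\textbf{Step 2 (conditional structure).} Fix a ball of $\mathcal{F}_{k-1}$. Its $p^{m_k}$ sub-balls in $\mathcal{F}_k$, with $m_k=\ell_k-\ell_{k-1}$, are equally weighted by Haar measure. On them $\gamma_{\ell_k}$ takes the values $\omega_j=e^{2\pi i(\theta+j)/p^{m_k}}$, $j=0,\dots,p^{m_k}-1$, for a phase $\theta$ determined by the parent ball. So the conditional law of the new level under $\mu_{p,a,\ell}$ is the uniform law weighted by $1+\mathbf{Re}\,a_k\omega_j$. By the Kabanov--Liptser--Shiryaev form of Kakutani's theorem, $\mu_{p,b,\ell}\ll\mu_{p,a,\ell}$ as soon as the conditional Hellinger distances
\[
H_k(\theta)=1-p^{-m_k}\sum_{j}\sqrt{(1+\mathbf{Re}\,a_k\omega_j)(1+\mathbf{Re}\,b_k\omega_j)}
\]
satisfy $\sum_k\sup_\theta H_k(\theta)<+\infty$. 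The condition is symmetric in $a$ and $b$, so the reverse absolute continuity follows in the same way.

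\textbf{Step 3 (main obstacle: bounding $H_k$).} I would first replace the Riemann sum by the integral over the circle. Writing $\sqrt{uv}=\frac{u+v}{2}-\frac{(\sqrt u-\sqrt v)^2}{2}$, one gets $H_k=\frac12 p^{-m_k}\sum_j(\sqrt{1+\mathbf{Re}\,a_k\omega_j}-\sqrt{1+\mathbf{Re}\,b_k\omega_j})^2$. The linear part averages to zero because $p^{m_k}\ge 3>1$.

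For the continuous average $\frac12\int_0^1(\sqrt{1+\mathbf{Re}\,a e(t)}-\sqrt{1+\mathbf{Re}\,b e(t)})^2dt$, I would use $(\sqrt u-\sqrt v)^2=(u-v)^2/(\sqrt u+\sqrt v)^2$ with $u-v=\mathbf{Re}((a-b)e(t))$ and $\sqrt u+\sqrt v\approx 2\sqrt{1+\mathbf{Re}\,\frac{a+b}{2}e(t)}$. The denominator degenerates only near the point where $\mathbf{Re}\,\frac{a+b}{2}e(t)=-\frac{|a+b|}{2}$. There $1+\mathbf{Re}\,\frac{a+b}{2}e(t)\approx \delta+ c t'^2$ with $\delta=1-|a+b|/2$, and the numerator is $\approx |a-b|^2\cos^2(s-t)$ up to the $O(t')$ correction. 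Integrating $|a-b|^2\cos^2(s-t)/(\delta+ct'^2)$ across a window of width $\sqrt\delta$ gives the term $|a-b|^2\cos^2(s_k-t_k)/\sqrt{2-|a+b|}$. The bulk of the circle gives $O(|a-b|^2)$.

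Finally, I have to control the discretisation error uniformly in $\theta$. The integrand varies on the scale $\sqrt\delta$ near its peak, and the grid spacing is $p^{-m_k}$. The hypothesis $p^{m_k}(2-|a_k+b_k|)^2>c$ guarantees that the grid resolves this window with many points. So a Riemann-sum estimate, comparing the derivative of the integrand with its size on the window, shows that the discrete sum is at most a constant multiple of the integral. This yields
\[
\sup_\theta H_k(\theta)\lesssim |a_k-b_k|^2\Big(1+\tfrac{\cos^2(s_k-t_k)}{\sqrt{2-|a_k+b_k|}}\Big),
\]
and summability is exactly the assumed condition. This Riemann-sum comparison near the degenerate point is the step I expect to be delicate.
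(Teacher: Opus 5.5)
The paper gives no proof of this proposition; it quotes it from its reference [wz13]. Your argument therefore has to stand on its own, and it has a genuine gap in Step 1: the sentence saying the $k$ with $|a_k|=1$ can be "handled separately". The Kabanov--Liptser--Shiryaev criterion in Step 2 presupposes local absolute continuity, $\mu_{p,b,\ell}|_{\mathcal{F}_N}\ll\mu_{p,a,\ell}|_{\mathcal{F}_N}$ for every $N$ (and symmetrically), i.e. $\{P_N^a=0\}$ and $\{P_N^b=0\}$ agree up to null sets.

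Nothing in the hypotheses makes the set of $k$ with $|a_k|=1$ finite. For instance, $a_k=-1$, $b_k=-1+\frac{1}{k}$ for $k\ge 2$, with $\ell_k=k^2$, satisfies both conditions. More seriously, a single such $k$ can already destroy local absolute continuity. On the circle, $1+\mathbf{Re}\,a e^{i\lambda t}$ vanishes only at finitely many points. Here $1+\mathbf{Re}\,a_k\gamma_{\ell_k}$ is locally constant, so whenever $a_k\gamma_{\ell_k}$ attains $-1$ (for example $a_k=-1$, on $p^{\ell_k}\mathbb{Z}_p$), it vanishes on a whole ball of positive Haar measure.

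This cannot be repaired. Take $p=3$, $\ell_k=k$, $a_1=-1$, $b_1=\frac{1}{2}$, and $a_k=b_k=\frac{1}{2}$ for $k\ge 2$. The gap condition is vacuous, and the series reduces to the single term $\frac{9}{4}\bigl(1+\sqrt{2/3}\bigr)<+\infty$. Yet your own Step 1 gives $\mu_{3,a,\ell}(3\mathbb{Z}_3)=\frac{1}{3}(1-1)=0$, while $\mu_{3,b,\ell}(3\mathbb{Z}_3)=\frac{1}{3}\cdot\frac{3}{2}=\frac{1}{2}$. So the statement as transcribed is false in this degenerate case. Your proof must add a hypothesis explicitly, such as $|a_k|,|b_k|<1$ for all $k$, or at least that $P_N^a$ and $P_N^b$ have the same zero set for each $N$.

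Under such a hypothesis the rest of your argument is sound. The tail integrates to Haar measure on $\mathcal{F}_N$-balls; this holds for every $p$, since the top nonzero term of $\sum_{i>N}\varepsilon_i p^{-\ell_i}$ dominates. The conditional law in Step 2 is correct, and a deterministic bound $\sum_k\sup_\theta H_k<+\infty$ is exactly what the predictable criterion needs.

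Step 3 is also less delicate than you fear. Set $N=p^{m_k}$, $c_k=\frac{a_k+b_k}{2}$, $\delta_k=1-|c_k|$, and let $\eta$ be the angular distance of $\arg(c_k\omega)$ from $\pi$. Use three facts:
\[
(\sqrt{u}-\sqrt{v})^2\le \frac{(u-v)^2}{u+v},\qquad u+v=2\bigl(\delta_k+2|c_k|\sin^2\tfrac{\eta}{2}\bigr),\qquad (u-v)^2\le 2|a_k-b_k|^2\bigl(\cos^2(s_k-t_k)+\eta^2\bigr).
\]
If $|c_k|<\frac{1}{2}$, then $\delta_k>\frac{1}{2}$ and everything is $O(|a_k-b_k|^2)$. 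Otherwise the $\eta^2$-part is $O(|a_k-b_k|^2)$ pointwise. For the remaining part, take the symmetric unimodal function $g=(\delta_k+\eta^2/\pi^2)^{-1}$. The elementary estimate
\[
\frac{1}{N}\sum_j g(\eta_j)\lesssim \int g+\frac{1}{N}\max g\lesssim \delta_k^{-1/2}+(N\delta_k)^{-1}
\]
holds uniformly in $\theta$. The hypothesis $N(2\delta_k)^2>c$ gives $(N\delta_k)^{-1}<4\delta_k/c\le 4/c$. So no derivative comparison is needed, and the hypothesis is in fact stronger than what this direction uses.
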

\section{Energy-integral and Fourier dimension}
In order to prove Theorem \ref{hwdl}, we need a Frostman lemma in $\mathbb{Q}_p^n$ as in $\mathbb{R}^n$.
\begin{lemma}[$p$-adic Frostman lemma]\label{Frostman}
  Let $0\le s \le n$. For a Borel set $A\subset \mathbb{Q}_p^n$, $\mathcal{H}^s(A)>0$ if and only if there exists a measure $\mu\in \mathcal{M}(A)$ such that 
  \begin{align}\label{frostman}
      \mu(B(x,p^m))\lesssim p^{ms},\ \text{for any}\ x\in\mathbb{Q}_p^n\ \text{and any}\ m\in\mathbb{Z}.
  \end{align}
\end{lemma}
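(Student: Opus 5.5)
The easy direction ($\Leftarrow$) I would prove directly from the definition of $\mathcal{H}^s$. The harder direction ($\Rightarrow$) I would prove by the classical dyadic construction, which becomes exact in $\mathbb{Q}_p^n$ because balls are nested.

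\emph{Easy direction.} Suppose $\mu\in\mathcal{M}(A)$ satisfies \eqref{frostman} with constant $C$. Take any cover $A\subset\bigcup_j E_j$ with $d(E_j)<\delta$. Any set $E_j$ meeting $\operatorname{spt}\mu$, with $d(E_j)=p^{m_j}$, is contained in a ball $B(x_j,p^{m_j})$, since diameters in $\mathbb{Q}_p^n$ take values in $\{p^m\}\cup\{0\}$. Points can be discarded since $\mu$ has no atoms by \eqref{frostman} with $s\ge 0$, provided $s>0$; the case $s=0$ is trivial. Then
\[0<\mu(A)\le\sum_j\mu(B(x_j,p^{m_j}))\le C\sum_j d(E_j)^s,\]
so $\mathcal{H}^s_\delta(A)\ge \mu(A)/C$ for every $\delta$, hence $\mathcal{H}^s(A)>0$.

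\emph{Hard direction, compact case.} I would first treat compact $A$. By translation and scaling I may assume $A\subset\mathbb{Z}_p^n$. For each $k\ge 0$, the sphere-free tiling of $\mathbb{Z}_p^n$ into $p^{nk}$ balls of radius $p^{-k}$ plays the role of dyadic cubes. Given $\mathcal{H}^s(A)>0$, I would build $\mu_k$ in three steps. First, put mass $p^{-ks}$, uniformly with respect to Haar measure, on each level-$k$ ball meeting $A$. Second, go up the tree: whenever a ball $Q$ of level $j<k$ has $\mu_k(Q)>p^{-js}$, rescale the measure inside $Q$ to have total mass exactly $p^{-js}$. This gives $\mu_k(Q)\le d(Q)^s$ for every ball $Q$ of radius $\ge p^{-k}$. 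Third, every point of $A$ lies in a maximal ball $Q$ with $\mu_k(Q)=d(Q)^s$, and these maximal balls are disjoint, so
\[\mu_k(\mathbb{Z}_p^n)=\sum_Q d(Q)^s\ge \mathcal{H}^s_\infty(A)>0.\]
Moreover $\mathcal{H}^s_\infty(A)>0$ follows from $\mathcal{H}^s(A)>0$. The masses are bounded by $1$. Normalizing and taking a weak-$*$ limit, which exists because $\mathbb{Z}_p^n$ is compact and the ball indicators are continuous, gives $\mu$ supported in $A$ with $\mu(B)\le d(B)^s$ for all balls of radius $\le 1$. Larger radii are trivial since $\mu$ is finite.

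\emph{Hard direction, Borel case.} The main obstacle is passing from compact to arbitrary Borel $A$: one needs a compact $K\subset A$ with $\mathcal{H}^s(K)>0$. In $\mathbb{R}^n$ this is Davies' theorem for Suslin sets. I would try to invoke the general result for complete separable metric spaces (Howroyd's theorem, or Federer 2.10.47/2.10.48, as in Mattila's book), since $\mathbb{Q}_p^n$ is complete, separable, and even boundedly compact. Alternatively, the proof of Theorem~8.13 in \cite{m15}, via weighted Hausdorff measures and the increasing sets lemma, should carry over verbatim with the dyadic cubes replaced by $p$-adic balls; the nesting of $p$-adic balls makes the increasing sets lemma cleaner. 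Since $\mathcal{H}^s$ is $\sigma$-finite-compatible on the $\sigma$-compact space $\mathbb{Q}_p^n$, I would first reduce to $A$ bounded, and then to $A\subset\mathbb{Z}_p^n$ by scaling.
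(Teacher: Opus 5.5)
Your proof is correct, but it takes a different route from the paper. The paper gives no argument: it invokes a Frostman lemma for general separable metric spaces (\cite[Proposition 3.1 and Theorem F]{fh25}) and gets both directions, for arbitrary Borel $A$, at once.

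You instead do most of the work by hand.
\begin{enumerate}
\item The easy direction is handled directly. A set of diameter $p^m$ in an ultrametric space lies in a closed ball of radius $p^m$, and \eqref{frostman} with $s>0$ rules out atoms.
\item The compact case of the hard direction uses the tree construction, where the $p$-adic structure makes the classical dyadic argument exact. The level-$k$ balls are the cosets of $p^k\mathbb{Z}_p^n$, and any two balls are nested or disjoint, so the maximal saturated balls are automatically disjoint. Balls are clopen, so $\mu(B)=\lim_i\mu_{k_i}(B)$ under weak-$*$ convergence. You therefore get the Frostman bound with constant $1$, with no boundary or covering-number losses.
\item The same clopen-ness also gives $\operatorname{spt}\mu\subset A$: $\mu_k$ lives on the $p^{-k}$-neighbourhood of the closed set $A$. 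You leave this step implicit.
\end{enumerate}

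The one non-elementary input left is the Davies--Howroyd theorem (Federer 2.10.47--48 in the boundedly compact case). It says a Borel set with $\mathcal{H}^s(A)>0$ in a complete separable metric space contains a compact $K$ with $\mathcal{H}^s(K)>0$. It applies to $\mathbb{Q}_p^n$ as you say. This step is where the real depth of the statement lies, so your route still rests on a citation exactly where the paper's black box does its essential work.

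Each approach has its advantage. Yours makes clear what is genuinely simpler in the $p$-adic setting and yields an explicit constant. The paper's is shorter and avoids checking that the weighted-Hausdorff-measure machinery transfers.

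Two small remarks:
\begin{itemize}
\item The reduction to bounded $A$ is just countable subadditivity of $\mathcal{H}^s$; the phrase ``$\sigma$-finite-compatible'' is not needed.
\item The Davies-type result you call Theorem 8.13 is presumably the one in Mattila's 1995 book on geometry of sets and measures, rather than anything in the Fourier-analytic book cited as \cite{m15}.
\end{itemize}
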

\begin{proof}
  Since $(\mathbb{Q}_p^n,|\cdot|_p)$ is a separable metric space, then by \cite[Proposition 3.1 and Theorem F]{fh25}, we complete the proof.
\end{proof}
\begin{lemma}\label{hwsl1}
  Let $A\subset \mathbb{Q}_p^n$ be a Borel set.
  If there exists $\mu\in \mathcal{M}(A)$, such that \eqref{frostman} holds for some $0\le s\le n$, then $I_t^p(\mu)<+\infty$ for any $0<t<s$.
\end{lemma}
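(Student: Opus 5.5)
The plan is to use the standard layer-cake decomposition of the inner integral over the $p$-adic spheres centred at $x$. Since $|x-y|_p$ takes only the values $p^{m}$, $m\in\mathbb{Z}$, and $0$, the first task is to dispose of the diagonal. Condition \eqref{frostman} with $s>0$ gives $\mu(\{x\})\le \mu(B(x,p^m))\lesssim p^{ms}\to 0$ as $m\to-\infty$, so $\mu$ has no atoms. By Fubini the diagonal $\{x=y\}$ is then $\mu\times\mu$-null. (If $s=0$ there is no $t\in(0,s)$, so there is nothing to prove.) For fixed $x$ we can then write
\[
\int |x-y|_p^{-t}\,d\mu(y)=\sum_{m\in\mathbb{Z}} p^{-mt}\,\mu\big(S(x,p^m)\big).
\]

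Next, split the sum at a fixed level $M$ chosen with $\operatorname{spt}\mu\subset B(0,p^{M})$. This is possible because the support is compact. For $x$ in the support and $m> M$, the ultrametric property gives $\mu(S(x,p^m))=0$. For $m\le M$ we bound $\mu(S(x,p^m))\le\mu(B(x,p^m))\lesssim p^{ms}$. The inner integral is therefore at most a constant times
\[
\sum_{m\le M} p^{m(s-t)}=\frac{p^{M(s-t)}}{1-p^{-(s-t)}},
\]
which is a convergent geometric series since $t<s$, and the bound is uniform in $x\in\operatorname{spt}\mu$. Integrating in $x$ then gives $I_t^p(\mu)\lesssim \mu(\mathbb{Q}_p^n)\,p^{M(s-t)}/(1-p^{t-s})<+\infty$, using $\mu(A)<+\infty$.

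I expect no step to be a real obstacle. The only points needing care are the null diagonal (handled by atomlessness as above) and the justification of the sphere decomposition via monotone convergence for the nonnegative integrand. If one prefers to avoid the explicit choice of $M$, one can instead note that $\mu(S(x,p^m))\le \mu(\mathbb{Q}_p^n)$ for large $m$, where the factor $p^{-mt}$ is summable. Either route handles the tail.
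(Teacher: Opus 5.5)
Your proof is correct and follows the paper's argument: you decompose the inner integral over the spheres $S(x,p^m)$, truncate at the top level using compact support, bound each sphere's mass by the Frostman condition, and sum the geometric series in $p^{m(s-t)}$. Your two deviations add care rather than change the method: you bound $\mu(S(x,p^m))$ directly by $\mu(B(x,p^m))$ instead of splitting the sphere into finitely many balls of radius $p^{m-1}$ as the paper does, and you check explicitly that the diagonal is $\mu\times\mu$-null (no atoms, by \eqref{frostman} with $s>0$), which the paper's sphere decomposition uses without saying so.
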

\begin{proof}
  Fix $0<t<s$. Since $\mu\in \mathcal{M}(A)$, then there exists some $k\in\mathbb{Z}$ such that $\text{spt}\mu\subset B_k$. Hence, for any $x\in B_k$,
  \[\int_{\mathbb{Q}_p^n} |x-y|_p^{-t}d\mu(y)= \sum_{m=-\infty}^{+\infty}\int_{\{y: |y-x|_p=p^m\}}|y-x|_p^{-t}d\mu(y)\le \sum_{m=-\infty}^{k}\frac{\mu(S(x,p^m))}{p^{mt}}.\]
  Note that $S(x,p^m)$ can be written as finitely many disjoint balls with radius $p^{m-1}$. That is, there exists some $i_0\ge 1$, such that
  \[S(x,p^m)=\bigsqcup_{i=1}^{i_0}B(x_i,p^{m-1}).\]
  Hence, by \eqref{frostman}, for any $x\in\mathbb{Q}_p^n$, 
  \[\mu(S(x,p^m))=\sum_{i=1}^{i_0}\mu(B(x_i,p^{m-1}))\lesssim p^{(m-1)s}.\] 
  Therefore, for any $0<t<s$,
  \begin{align*}
    I_t^p(\mu)=\int_{B_k}\int_{B_k} |x-y|_p^{-t}d\mu(x) d\mu(y) \lesssim &
    \int_{B_k} \sum_{m=-\infty}^{k}p^{m(s-t)}d\mu(x)\\
    \lesssim & \sum_{m=0}^{+\infty}p^{-m(s-t)}<+\infty.  
  \end{align*}
\end{proof}
\begin{lemma}\label{hwsl2}
  Let $A\subset \mathbb{Q}_p^n$ be a Borel set. If there exists $\mu\in \mathcal{M}(A)$, such that $I_s^p(\mu)<+\infty$ for some $0\le s\le n$, then there exists $\nu\in \mathcal{M}(A)$ such that \eqref{frostman} holds for $s$.
\end{lemma}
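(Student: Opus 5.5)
We prove Lemma \ref{hwsl2} by the standard restriction argument: we restrict $\mu$ to the set where its $s$-potential is bounded.

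\textbf{Step 1: the potential is finite almost everywhere.} For $x\in\mathbb{Q}_p^n$ define
\[\phi(x)=\int_{\mathbb{Q}_p^n}|x-y|_p^{-s}\,d\mu(y)\in[0,+\infty].\]
The function $\phi$ is lower semicontinuous, since it is an increasing limit of the continuous truncations $\min(|x-y|_p^{-s},N)$ integrated against $\mu$. Hence $\phi$ is Borel. By Fubini,
\[\int\phi\, d\mu=I_s^p(\mu)<+\infty,\]
so $\phi<+\infty$ for $\mu$-almost every $x$.

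\textbf{Step 2: restrict $\mu$.} Since $\mu(A)>0$, there is $M>0$ with $\mu(\{x:\phi(x)\le M\})>0$. Because $\mu$ is a finite Radon measure on the separable complete metric space $\mathbb{Q}_p^n$, inner regularity gives a compact set
\[K\subset\{\phi\le M\}\cap \operatorname{spt}\mu\quad\text{with}\quad \mu(K)>0.\]
Set $\nu=\mu|_K$. Then $\operatorname{spt}\nu\subset K\subset\operatorname{spt}\mu\subset A$ is compact, and $0<\nu(A)\le\mu(A)<+\infty$, so $\nu\in\mathcal{M}(A)$. If $\operatorname{spt}\mu\subset A$ is assumed only via $\mu(\mathbb{Q}_p^n\setminus A)=0$, we intersect with $A$ first; the paper's definition already gives $\operatorname{spt}\mu\subset A$.

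\textbf{Step 3: the Frostman bound.} Fix a ball $B(x,p^m)$. If $\nu(B(x,p^m))=0$ there is nothing to prove. Otherwise pick $z\in K\cap B(x,p^m)$. By the ultrametric property, $B(x,p^m)=B(z,p^m)$, and every $y$ in this ball satisfies $|z-y|_p\le p^m$, so $|z-y|_p^{-s}\ge p^{-ms}$. Therefore
\[\nu(B(x,p^m))\le\mu(B(z,p^m))\le p^{ms}\int_{B(z,p^m)}|z-y|_p^{-s}\,d\mu(y)\le p^{ms}\phi(z)\le Mp^{ms}.\]
This is exactly \eqref{frostman} for $s$. No doubling of the radius is needed here, because $p$-adic balls centered at any of their points coincide.

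\textbf{Edge case $s=0$.} The condition $\nu(B)\lesssim 1$ holds trivially because $\nu$ is finite, so the argument covers this case too.

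The main point needing care is the measurability of $\phi$ together with the inner regularity used to obtain a compact $K$ of positive measure. Both follow from lower semicontinuity and from the Radon property of finite Borel measures on Polish spaces. The rest is direct.
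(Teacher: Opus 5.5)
Your proof is correct and follows the same route as the paper: you restrict $\mu$ to a sublevel set of its $s$-potential and bound each ball's mass by $p^{ms}$ times that potential. The one real difference is in your favour: the paper's displayed chain bounds $\int_C|y-x|_p^{-s}\,d\mu(y)$ by $M$ for an arbitrary center $x\in\mathbb{Q}_p^n$, which follows from the definition of $C$ only when $x\in C$, whereas your re-centering at $z\in K\cap B(x,p^m)$ (valid because every point of a $p$-adic ball is a center) justifies that step; your inner-regularity passage to a compact $K$ is harmless but unnecessary, since $\operatorname{spt}(\mu|_C)\subset\operatorname{spt}\mu$ is already compact.
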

\begin{proof}
  Since $I_s^p(\mu)<+\infty$, we have
  \[\int_{\mathbb{Q}_p^n} |x-y|_p^{-s}d\mu(y)<+\infty\ \text{for}\ \mu\text{-}a.e.\  x\in A.\]
  Hence, we can find $M\in (0,+\infty)$, such that 
  \[C=\left\{x\in A: \int_{\mathbb{Q}_p^n} |x-y|_p^{-s}d\mu(y)<M\right\}\ \text{has positive}\ \mu\text{-}\text{measure}.\]
  Let $\nu=\mu|_C$. Then $\nu\in \mathcal{M}(A)$, and for any $x\in\mathbb{Q}_p^n$ and any $m\in\mathbb{Z}$,
  \begin{align*}
    \nu(B(x,p^m))= & \mu(B(x,p^m)\cap C)\\
    = & \int_{B(x,p^m)} \mathbf{1}_C(y)d\mu(y)\\
    \le & \int_{B(x,p^m)} \left(\frac{|y-x|_p}{p^m}\right)^{-s}\mathbf{1}_C(y)d\mu(y)\\
    \le & p^{ms}\int_C |y-x|_p^{-s}d\mu(y)<Mp^{ms}.
  \end{align*}
  This completes the proof.
\end{proof}
Now, we are ready to prove Theorem \ref{hwdl} using Lemmas \ref{Frostman}, \ref{hwsl1} and \ref{hwsl2}.
\begin{proof}[Proof of Theorem \ref{hwdl}]
  Denote the set $\{0\le s\le n:\exists\ \mu\in \mathcal{M}(A),\ \text{s.t.}\ \eqref{frostman}\ \text{holds for}\ s\}$ by $S$.
  We first prove that
  \begin{align}\label{hwd2}
    \dim_H(A)=\sup S.
  \end{align}
  For any $s\in S$, by Lemma \ref{Frostman}, we have $\mathcal{H}^s(A)>0$, which implies $\dim_H(A)\ge s$. Hence, $\dim_H(A)\ge \sup S$. \par
  On the other hand, suppose $\dim_H(A)>\sup S$. Then, we can choose $t$ such that $\sup S<t<\dim_H(A)$. By definition $\mathcal{H}^t(A)=+\infty$. Then, by Lemma \ref{Frostman}, we have $t\in S$, which leads to a contradiction.
  \par
  Let $T:=\{0\le s\le n:\ \text{exists}\ \mu\in \mathcal{M}(A),\ \text{such that}\ I_{s}^p(\mu)<+\infty\}$. To finish the proof, we show $\sup S=\sup T$. In fact, for any $s\in S$, by Lemma \ref{hwsl1}, if $0<t<s$, then $t\in T$. This implies $\sup T\ge \sup S$. On the other hand, for any $t\in T$, by Lemma \ref{hwsl2}, we have $t\in S$. Hence, $\sup T\le \sup S$.
\end{proof}
Before we prove Theorem \ref{thm1.2}, we need two lemmas.
\begin{lemma}\label{bjhs}
  Let $\mu\in\mathcal{M}(\mathbb{Q}_p^n)$. Let $\phi=\mathbf{1}_{\mathbb{Z}_p^n}$. For any $\varepsilon\in\mathbb{Q}_p\setminus \{0\}$, define $\phi_{\varepsilon}(x)=|\varepsilon|_p^{-n}\phi(\frac{x}{\varepsilon})$ for all $x\in\mathbb{Q}_p^n$. Let $\mu_{\varepsilon}=\phi_{\varepsilon} * \mu$. Then, $\mu_{\varepsilon}$ and $\widehat{\mu}_{\varepsilon}$ have compact support. Moreover,
  \[\mu_{\varepsilon},\ \widehat{\mu}_{\varepsilon}\in L^1(\mathbb{Q}_p^n).\]
\end{lemma}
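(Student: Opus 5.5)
The strategy is to treat the two functions separately. For $\mu_\varepsilon$, I will show directly that it is a uniformly locally constant function with compact support. For $\widehat{\mu}_\varepsilon$, I will factor it as a product and invoke Proposition \ref{jbcz}.

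\emph{Step 1: $\mu_\varepsilon$.} Write $|\varepsilon|_p=p^{-k}$. Then
\[\phi_\varepsilon=p^{nk}\mathbf{1}_{B_{-k}},\qquad \mu_\varepsilon(x)=p^{nk}\,\mu\big(B(x,p^{-k})\big).\]
\emph{Compact support.} Since $\mu\in\mathcal{M}(\mathbb{Q}_p^n)$, we have $\operatorname{spt}\mu\subset B_m$ for some $m$. If $|x|_p>\max(p^m,p^{-k})$, the ultrametric inequality makes $B(x,p^{-k})$ disjoint from $B_m$. Hence $\mu_\varepsilon(x)=0$ outside a bounded ball, so $\mu_\varepsilon$ has compact support.

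\emph{Local constancy.} For $u\in B_{-k}$ we have $B(x+u,p^{-k})=B(x,p^{-k})$, since in an ultrametric space every point of a ball is a center. So $\mu_\varepsilon(x+u)=\mu_\varepsilon(x)$, and $\mu_\varepsilon$ is uniformly locally constant.

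\emph{Integrability.} The bound $0\le\mu_\varepsilon\le p^{nk}\mu(\mathbb{Q}_p^n)<\infty$ together with compact support gives $\mu_\varepsilon\in L^1(\mathbb{Q}_p^n)$. Alternatively, Fubini gives $\int\mu_\varepsilon\,dx=\mu(\mathbb{Q}_p^n)\int\phi_\varepsilon=\mu(\mathbb{Q}_p^n)$.

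\emph{Step 2: $\widehat{\mu}_\varepsilon$.} Since $\mu_\varepsilon$ is integrable and uniformly locally constant, Proposition \ref{jbcz} gives directly that $\widehat{\mu}_\varepsilon$ has compact support. For integrability, use Fubini to obtain
\[\widehat{\mu}_\varepsilon(\xi)=\widehat{\phi}_\varepsilon(\xi)\,\widehat{\mu}(\xi),\]
which is justified because $\phi_\varepsilon\in L^1$ and $\mu$ is finite. The first factor is explicit: by the listed integral of $\chi$ over $B_{-k}$,
\[\widehat{\phi}_\varepsilon=\mathbf{1}_{B_k}.\]
The second factor satisfies $|\widehat{\mu}|\le\mu(\mathbb{Q}_p^n)$, and $\widehat{\mu}$ is continuous. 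Hence $\widehat{\mu}_\varepsilon$ is bounded and supported in $B_k$, which has finite Haar measure $p^{nk}$. So $\widehat{\mu}_\varepsilon\in L^1(\mathbb{Q}_p^n)$. This computation also recovers the compact support directly, without appealing to Proposition \ref{jbcz}.

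\emph{Main obstacle.} The only mildly delicate points are measure-theoretic. One is measurability of $x\mapsto\mu(B(x,p^{-k}))$; this is immediate because the function is constant on the cosets of $B_{-k}$, of which there are countably many. The other is justifying Fubini in the Fourier product formula; this holds because the integrand $\phi_\varepsilon(x-y)\chi(-\xi x)$ is bounded and compactly supported in $x$, and $\mu$ is finite.
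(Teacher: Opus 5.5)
Your proof is correct and follows the paper's route. Both arguments write $\mu_\varepsilon(x)=|\varepsilon|_p^{-n}\mu\big(B(x,|\varepsilon|_p)\big)$, show it is bounded, compactly supported and uniformly locally constant, and then apply Proposition \ref{jbcz}. The one difference is your explicit factorization $\widehat{\mu}_\varepsilon=\widehat{\phi}_\varepsilon\widehat{\mu}=\mathbf{1}_{B(0,|\varepsilon|_p^{-1})}\widehat{\mu}$, which replaces the paper's bound $|\widehat{\mu}_\varepsilon|\le\|\mu_\varepsilon\|_1$; it is a valid self-contained alternative that gives the exact support of $\widehat{\mu}_\varepsilon$ without needing Proposition \ref{jbcz}.
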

\begin{proof}
  Since $\mu\in\mathcal{M}(\mathbb{Q}_p^n)$, then there exists some $k\in\mathbb{Z}$ such that $\text{spt}\mu\subset B_k$. Hence,
  \begin{align*}
    \mu_{\varepsilon}(x)=|\varepsilon|_p^{-n}\int_{B_k} \phi\left(\frac{x-y}{\varepsilon}\right)d\mu(y).
  \end{align*} 
  Therefore, if $|x|_p>\max\{|\varepsilon|_p,p^k\}$, then $\left|\frac{x-y}{\varepsilon}\right|_p>1$. This implies that $\mu_{\varepsilon}$ has compact support. On the other hand, for any $x\in \mathbb{Q}_p^n$,
  \[\mu_{\varepsilon}(x)=|\varepsilon|_p^{-n}\int_{\mathbb{Q}_p^n} \phi\left(\frac{x-y}{\varepsilon}\right)d\mu(y)=|\varepsilon|_p^{-n}\mu(B(x,|\varepsilon|_p))\le |\varepsilon|_p^{-n}\mu(\mathbb{Q}_p^n).\]
  Thus $\mu_{\varepsilon}\in L^1(\mathbb{Q}_p^n)$. Noting that 
  for any $y\in \mathbb{Q}_p^n$ with $|y|_p\le |\varepsilon|_p$, we have $\mu_{\varepsilon}(x+y)=\mu_{\varepsilon}(x)$. Then $\mu_{\varepsilon}$ is uniformly locally constant. By Proposition \ref{jbcz}, we have that $\widehat{\mu}_{\varepsilon}$ has compact support. Since $|\widehat{\mu}_{\varepsilon}|\le \| \mu_{\varepsilon}\|_1$, we conclude $\widehat{\mu}_{\varepsilon}\in L^1(\mathbb{Q}_p^n)$. This completes the proof.
\end{proof}
\begin{lemma}\label{cdbj}
  Let $\mu\in\mathcal{M}(\mathbb{Q}_p^n)$. Let $\phi$ be a complex valued function on $\mathbb{Q}_p^n$ with compact support. For any $\varepsilon\in\mathbb{Q}_p\setminus \{0\}$, define $\phi_{\varepsilon}(x)=|\varepsilon|_p^{-n}\phi(\frac{x}{\varepsilon})$. If $\int_{\mathbb{Q}_p^n} |\phi(x)|dx<+\infty$, then for any $x\in\mathbb{Q}_p^n$,
  \[\lim_{|\varepsilon|_p\to 0}\widehat{\mu}_{\varepsilon}(x)=\widehat{\phi}(0)\widehat{\mu}(x).\]
\end{lemma}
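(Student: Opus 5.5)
The plan is to compute $\widehat{\mu}_{\varepsilon}$ explicitly via the convolution theorem and then pass to the limit by dominated convergence. First I will check that $\mu_{\varepsilon}=\phi_{\varepsilon}*\mu$ is in $L^1(\mathbb{Q}_p^n)$. By Fubini,
\[\int|\mu_\varepsilon(x)|dx\le\int\!\!\int|\phi_\varepsilon(x-y)|dx\,d\mu(y)=\|\phi_\varepsilon\|_1\,\mu(\mathbb{Q}_p^n).\]
The change of variables $x=\varepsilon u$ in $\mathbb{Q}_p^n$ has Jacobian $d x=|\varepsilon|_p^n du$, so $\|\phi_\varepsilon\|_1=\|\phi\|_1<+\infty$. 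Hence $\widehat{\mu}_\varepsilon(x)$ is well defined for every $x$.

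Next I will compute the transform. Since $\chi(-x z)=\chi(-x(z-y))\chi(-xy)$ by additivity of $\chi$, Fubini (justified by the $L^1$ bound above) gives
\[\widehat{\mu}_\varepsilon(x)=\int\!\!\int\phi_\varepsilon(z-y)\chi(-x(z-y))\,dz\,\chi(-xy)\,d\mu(y)=\widehat{\phi_\varepsilon}(x)\,\widehat{\mu}(x).\]
The same substitution $z=\varepsilon u$ yields $\widehat{\phi_\varepsilon}(x)=\int\phi(u)\chi(-\varepsilon x u)\,du=\widehat{\phi}(\varepsilon x)$. So $\widehat{\mu}_\varepsilon(x)=\widehat{\phi}(\varepsilon x)\widehat{\mu}(x)$, and it suffices to show $\widehat{\phi}(\varepsilon x)\to\widehat{\phi}(0)$ as $|\varepsilon|_p\to0$ for fixed $x$.

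This last step is the only point needing care, namely continuity of $\widehat{\phi}$ at $0$. The simplest route is to use that $\phi$ has compact support, say in $B_k$. Once $|\varepsilon x|_p\,p^{k}\le1$, we have $\chi(-\varepsilon x u)=1$ for all $u\in B_k$, because then $\varepsilon x u\in\mathbb{Z}_p$. So $\widehat{\phi}(\varepsilon x)=\widehat{\phi}(0)$ exactly for all small $|\varepsilon|_p$, and the limit is immediate. Alternatively, dominated convergence with dominating function $|\phi|$ and pointwise convergence of $\chi(-\varepsilon xu)\to1$ gives the same conclusion without using the support assumption. Either way, the identity $\lim_{|\varepsilon|_p\to0}\widehat{\mu}_{\varepsilon}(x)=\widehat{\phi}(0)\widehat{\mu}(x)$ follows.
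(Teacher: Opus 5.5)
Your proof is correct and follows the paper's route: you use the convolution identity $\widehat{\mu}_\varepsilon=\widehat{\phi}_\varepsilon\widehat{\mu}$ with $\widehat{\phi}_\varepsilon(\xi)=\widehat{\phi}(\varepsilon\xi)$, then continuity of $\widehat{\phi}$ at $0$ from $\operatorname{spt}\phi\subset B_k$. The differences are minor: the paper proves continuity through the bound $|\widehat{\phi}(\varepsilon\xi)-\widehat{\phi}(0)|\le 2\pi|\varepsilon|_p\int_{B_k}|\phi(x)||\xi\cdot x|_p\,dx$ (via $|\chi(y)-1|\le 2\pi|y|_p$), whereas you note the sharper fact that this difference is exactly zero once $|\varepsilon\xi|_p\,p^k\le 1$, and you also check $\mu_\varepsilon\in L^1$ and the convolution identity, which the paper takes for granted.
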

\begin{proof}
  Since $\phi$ has compact support, then there exists some $k\in\mathbb{Z}$, such that $\text{spt}\phi\subset B_k$.
  Since $\widehat{\phi}_{\varepsilon}(\xi)=\widehat{\phi}(\varepsilon\xi)$, we have
  \begin{align*}
    |\widehat{\phi}_{\varepsilon}(\xi)-\widehat{\phi}(0)|
     = & \left|\int \phi(x)\left(\chi(-\varepsilon\xi\cdot x)-1\right)dx\right|\\
     \le & \int |\phi(x)|\cdot 2\pi \{-\varepsilon\xi\cdot x\}_pdx\\
     \le & \int_{B_k} |\phi(x)|\cdot 2\pi |\varepsilon\xi\cdot x|_pdx\\
     = & 2\pi |\varepsilon|_p\int_{B_k} |\phi(x)||\xi\cdot x|_pdx.
  \end{align*}
  Hence,
  \begin{align}\label{a}
    \lim_{|\varepsilon|_p\to 0}\widehat{\phi}_{\varepsilon}(\xi)=\widehat{\phi}(0).
  \end{align} 
  Since $\phi_{\varepsilon}\in L^1(\mathbb{Q}_p^n)$ and $\mu\in\mathcal{M}(\mathbb{Q}_p^n)$, we have 
  $\widehat{\mu}_{\varepsilon}=\widehat{\phi}_{\varepsilon}\widehat{\mu}$. By \eqref{a}, we complete the proof.
\end{proof}
Now, we are at the position to prove Theorem \ref{thm1.2}.
\begin{proof}[Proof of Theorem \ref{thm1.2}]
  For $n\in \mathbb{N}$ and $s\in (0,n)$, let $C(n,s)=\frac{1-p^{-s}}{1-p^{s-n}}$ and define $K_s(x)=|x|_p^{-s}$ for $x\in\mathbb{Q}_p^n$. Note that $\sum\limits_{\gamma=0}^{+\infty}(p^{-\gamma})^{-s}p^{-n\gamma}=\sum\limits_{\gamma=0}^{+\infty}p^{(s-n)\gamma}<+\infty$. Then, by Lemma \ref{frf}, we have
  \[\widehat{K}_s(x)=|x|_p^{s-n}\left((1-p^{-n})\sum_{\gamma =0}^{+\infty}p^{(s-n)\gamma}-p^{-s}\right)=C(n,s)K_{n-s}(x),\ \forall x\neq 0.\]
  Let $\varphi$ be a positive real valued bounded function on $\mathbb{Q}_p^n$ with compact support. Suppose $\varphi,\ \widehat{\varphi}\in L^1(\mathbb{Q}_p^n)$. Then, we can define
  \[I_s^p(\varphi):=\int\int |x-y|_p^{-s}\varphi(x)\varphi(y)dxdy.\]
  Letting $z=y-x$ and denoting $\tilde{\varphi}(x)=\varphi(-x)$, we have 
  \begin{align*}
    I_s^p(\varphi) = & \int\int K_s(y-x)\varphi(x)\varphi(y)dxdy\\
    = & \int\int K_s(z)\varphi(y-z)\varphi(y)dzdy\\
    = & \int K_s(z)(\tilde{\varphi} * \varphi(z))dz.
  \end{align*}
  Since $\varphi,\ \widehat{\varphi}\in L^1(\mathbb{Q}_p^n)$, we have 
  \begin{align}\label{b}
    \widehat{|\widehat{\varphi}|^2}=\tilde{\varphi} * \varphi.
  \end{align}
  By assumption $\varphi$ has compact support. Hence, $I_s^p(\varphi)<+\infty$. By \eqref{b} and Fubini's theorem,
  \[\int K_s(z)(\tilde{\varphi} * \varphi(z))dz=\int \widehat{K}_s(z)|\widehat{\varphi}(z)|^2dz.\]
  Hence,
  \[I_s^p(\varphi)=C(n,s)\int |\widehat{\varphi}(x)|^2|x|_p^{s-n}dx.\]
  For any $\varepsilon\in\mathbb{Q}_p\setminus \{0\}$, define $\phi_{\varepsilon}(x)=|\varepsilon|_p^{-n}\phi(\frac{x}{\varepsilon})$ with $\phi(x)=\mathbf{1}_{\mathbb{Z}_p^n}(x),\ \forall x\in \mathbb{Q}_p^n$. Let $\mu_{\varepsilon}:=\phi_{\varepsilon} * \mu$. By Lemma \ref{bjhs}, we have that $\mu_{\varepsilon}$ has compact support and $\mu_{\varepsilon},\ \widehat{\mu}_{\varepsilon}\in L^1(\mathbb{Q}_p^n)$. Therefore, we can choose $\varphi=\mu_{\varepsilon}$, and we have
  \begin{align*}
    I_s^p(\mu_{\varepsilon})=C(n,s)\int |\widehat{\mu}_{\varepsilon}(x)|^2|x|_p^{s-n}dx.
  \end{align*}
  By Lemma \ref{cdbj} and the dominated convergence theorem, 
  \[\lim_{|\varepsilon|_p\to 0}I_s^p(\mu_{\varepsilon})=C(n,s)\int |\widehat{\mu}(x)|^2|x|_p^{s-n}dx.\]
  On the other hand, by the definition of $I_s^p(\mu_{\varepsilon})$ and Fubini's theorem,
  \begin{align*}
    I_s^p(\mu_{\varepsilon})= & C(n,s)\int\int\left(|x-y|_p^{-s}\int \phi_{\varepsilon}(x-z)d\mu(z)\int \phi_{\varepsilon}(y-\omega)d\mu(\omega)\right)dxdy\\
    = & C(n,s)\int\int\left(\int\int |x-y|_p^{-s}\phi_{\varepsilon}(x-z)\phi_{\varepsilon}(y-\omega)dxdy\right)d\mu(z)d\mu(\omega).
  \end{align*}
  Now, we consider the integral
  \begin{align}\label{c}
    \int\int |x-y|_p^{-s} \phi_{\varepsilon}(x-z) \phi_{\varepsilon}(y-\omega)dxdy.
  \end{align}
  By the change of variables $u=\frac{x-z}{\varepsilon},\ v=\frac{y-\omega}{\varepsilon}$, the integral \eqref{c} becomes 
  \[\int\int |\varepsilon(u-v)+z-\omega|_p^{-s} \phi(u) \phi(v)dudv.\]
  In order to prove $\eqref{c}\le |z-\omega|_p^{-s}$, we distinguish three cases.
  \begin{enumerate}[i)]
    \item If $|z-\omega|_p<|\varepsilon(u-v)|_p$, then $|\varepsilon(u-v)+z-\omega|_p^{-s}=|\varepsilon(u-v)|_p^{-s}<|z-\omega|_p^{-s}$. Hence,
  \[\int\int |x-y|_p^{-s} \phi_{\varepsilon}(x-z) \phi_{\varepsilon}(y-\omega)dxdy<|z-\omega|_p^{-s}.\]
  \item If $|z-\omega|_p>|\varepsilon(u-v)|_p$, then 
  $|\varepsilon(u-v)+z-\omega|_p^{-s}=|z-\omega|_p^{-s}$. Hence,
  \[\int\int |x-y|_p^{-s} \phi_{\varepsilon}(x-z) \phi_{\varepsilon}(y-\omega)dxdy=|z-\omega|_p^{-s}.\]
  \item If $|z-\omega|_p=|\varepsilon(u-v)|_p$, then by noting that $du, dv$ are Haar measure, 
  \[\int\int |\varepsilon(u-v)+z-\omega|_p^{-s} \phi(u) \phi(v)dudv\lesssim |z-\omega|_p^{-s}.\]
  Therefore, we have
  \begin{align}\label{d}
    \int\int |x-y|_p^{-s} \phi_{\varepsilon}(x-z) \phi_{\varepsilon}(y-\omega)dxdy\lesssim |z-\omega|_p^{-s}.
  \end{align}
  \end{enumerate}
  Now, we distinguish two cases.
  \begin{enumerate}[i)]
    \item If $I_s^p(\mu)<+\infty$, then by \eqref{d} and the dominated convergence theorem,
    \begin{align*}
      \lim_{|\varepsilon|_p\to 0}I_s^p(\mu_{\varepsilon})= &\int\int \left(\lim_{|\varepsilon|_p\to 0}\int\int |x-y|_p^{-s} \phi_{\varepsilon}(x-z) \phi_{\varepsilon}(y-\omega)dxdy\right)d\mu(z)d\mu(\omega)\\
      = & \int\int |z-\omega|_p^{-s}d\mu(z)d\mu(\omega)\\
      = & I_s^p(\mu).
    \end{align*}
    \item If $I_s^p(\mu)=+\infty$, then by Fatou's lemma,
    \begin{align*}
      I_s^p(\mu)\le \underset{|\varepsilon|_p\to 0}{\varliminf}I_s^p(\mu_{\varepsilon})=\int |\widehat{\mu}(x)|^2|x|_p^{s-n}dx.
    \end{align*}
  \end{enumerate}
  Combining the above two cases, we complete the proof.
\end{proof}
Now, we prove Theorem \ref{qiuhe} by using Lemma \ref{bjhs} and Parseval's identity.
\begin{proof}[Proof of Theorem \ref{qiuhe}]
  Define $\mu_{\varepsilon}$ as in Lemma \ref{bjhs} for $\varepsilon\in\mathbb{Z}_p\setminus \{0\}$. Since 
  $$B(x,|\varepsilon|_p)\subset S(0,|x|_p),\ \forall\ |x|_p>1,$$
  Lemma \ref{bjhs} implies that
  \begin{align}\label{e}
    \text{spt}\mu_{\varepsilon}\subset \mathbb{Z}_p^n.
  \end{align}
  We claim that 
  $$(\mathbf{1}_{\mathbb{Z}_p^n}\cdot K_s)* \mu_{\varepsilon}\in L^2(\mathbb{Z}_p^n).$$
  Then, by Parseval's identity and the fact that $\mathbf{1}_{\mathbb{Z}_p^n}K_s,\ \mu_{\varepsilon}\in L^1(\mathbb{Z}_p^n)$, we obtain
  \[\int_{\mathbb{Z}_p^n}((\mathbf{1}_{\mathbb{Z}_p^n}\cdot K_s)* \mu_{\varepsilon}(x))\mu_{\varepsilon}(x)dx=\underset{z\in\widehat{\mathbb{Z}}_p^n}{\sum}\widehat{\mathbf{1}_{\mathbb{Z}_p^n}\cdot K_s}(z)|\widehat{\mu}_{\varepsilon}(z)|^2.\]
  For any $z\in \widehat{\mathbb{Z}}_p^n\setminus \{0\}$, we have 
  \begin{align*}
    \widehat{\mathbf{1}_{\mathbb{Z}_p^n}\cdot K_s}(z) & = \int_{\mathbb{Z}_p^n}|x|_p^{-s}\chi_p(-x\cdot z)dx\\
    & = C(n,s)|z|_p^{s-n}.
  \end{align*}
  Therefore, 
  \[\lim_{|\varepsilon|_p\to 0}\int_{\mathbb{Z}_p^n}((\mathbf{1}_{\mathbb{Z}_p^n}\cdot K_s)* \mu_{\varepsilon}(x))\mu_{\varepsilon}(x)dx=\frac{1-p^{-n}}{1-p^{s-n}}\mu(\mathbb{Z}_p^n)^2+\frac{1-p^{-s}}{1-p^{s-n}}\underset{z\in\widehat{\mathbb{Z}}_p^n\setminus \{0\}}{\sum}|\widehat{\mu}(z)|^2|z|_p^{s-n}.\]
  On the other hand, by the fact that $\mathbb{Z}_p^n-\mathbb{Z}_p^n=\mathbb{Z}_p^n$ and Fubini's theorem, we have
  \begin{align*}
    \int_{\mathbb{Z}_p^n}((\mathbf{1}_{\mathbb{Z}_p^n}\cdot K_s)* \mu_{\varepsilon}(x))\mu_{\varepsilon}(x)dx & = \int_{\mathbb{Z}_p^n}\int_{\mathbb{Z}_p^n}\mathbf{1}_{\mathbb{Z}_p^n}(x-y)|x-y|_p^{-s} \mu_{\varepsilon}(y)\mu_{\varepsilon}(x)dxdy\\
    & = I_s^p(\mu_{\varepsilon}).
  \end{align*}
  This completes the proof.\par 
  We now prove the claim. Since $0<s<n$, we have
  \[\int_{\mathbb{Z}_p^n}|x-y|_p^{-s}dy=\int_{\mathbb{Z}_p^n}|y|_p^{-s}dy=\frac{1-p^{-n}}{1-p^{s-n}}<+\infty.\]
  Moreover, by \eqref{e}, we have that the convolution $(\mathbf{1}_{\mathbb{Z}_p^n}\cdot K_s)* \mu_{\varepsilon}$ is supported in $\mathbb{Z}_p^n$.
  Therefore,
  \[\int_{\mathbb{Q}_p^n}\left|\int_{\mathbb{Q}_p^n}\mathbf{1}_{\mathbb{Z}_p^n}(x-y)|x-y|_p^{-s} \mu_{\varepsilon}(y)dy\right|^2dx\lesssim \int_{\mathbb{Z}_p^n}\left|\int_{\mathbb{Z}_p^n}|x-y|_p^{-s}dy\right|^2dx<+\infty.\]
  Consequently, 
  $$(\mathbf{1}_{\mathbb{Z}_p^n}\cdot K_s)* \mu_{\varepsilon}\in L^2(\mathbb{Z}_p^n).$$
\end{proof}
The proof of Theorem \ref{thm1.3} follows the same argument as its Euclidean counterpart. Indeed, having established Theorems \ref{hwdl} and \ref{thm1.2}, the proof proceeds analogously to that in $\mathbb{R}^n$.
\begin{proof}[Proof of Theorem \ref{thm1.3}]
  Let $t\in[0,n]$ be such that there exists a measure $\mu\in\mathcal{M}(A)$ satisfying
  \begin{equation}\label{eq:mu-decay}
   |\widehat{\mu}(x)|\lesssim |x|_p^{-\frac{t}{2}},\qquad \forall\, x\in\mathbb{Q}_p^n.
  \end{equation}
  If $\dim_F(A)=0$, then theorem \ref{thm1.3} holds obviously. Hence, without loss of generality, we assume $t>0$.
  Since $\mu\in\mathcal{M}(A)$, there exists an integer $k\in\mathbb{Z}$ such that $\operatorname{spt}\mu\subset B_k$. Consequently,
 \[\widehat{\mu}(x)=\int_{\mathbb{Q}_p^n} \chi(-x\cdot y)\,d\mu(y)=\int_{B_k}\chi(-x\cdot y)\,d\mu(y).\]
  If $|x|_p\le p^{-k}$, then for every $y\in B_k$ we have $|x\cdot y|_p\le |x|_p|y|_p\le 1$, which implies $\{x\cdot y\}_p=0$. Therefore,
  \begin{align*}
  \widehat{\mu}(x)=\mu(B_k)=\mu(\mathbb{Q}_p^n),\qquad \forall\, x\in B_{-k}.
  \end{align*}
  Now, let $s\in(0,t)$ be arbitrary. The $s$-dimensional $p$-adic energy of $\mu$ can be estimated as follows:
  \begin{align}\label{eq:energy-split}
   I_s^p(\mu) 
   &= \frac{1-p^{-s}}{1-p^{s-n}}\int_{\mathbb{Q}_p^n} |\widehat{\mu}(x)|^2|x|_p^{s-n}\,dx \notag\\[4pt]
   &\lesssim \sum_{m=-\infty}^{+\infty}\int_{S_m}|\widehat{\mu}(x)|^2|x|_p^{s-n}\,dx \notag\\[4pt]
   &= \sum_{m=-\infty}^{-k}\int_{S_m}\bigl(\mu(\mathbb{Q}_p^n)\bigr)^2 p^{m(s-n)}\,dx
   +\sum_{m=-k+1}^{+\infty}\int_{S_m}|\widehat{\mu}(x)|^2 p^{m(s-n)}\,dx \notag\\[4pt]
   &= \bigl(\mu(\mathbb{Q}_p^n)\bigr)^2(1-p^{-n})\sum_{m=k}^{+\infty}p^{-sm}
   +\sum_{m=-k+1}^{+\infty}\int_{S_m}|\widehat{\mu}(x)|^2 p^{m(s-n)}\,dx. 
  \end{align}
  For the second sum in \eqref{eq:energy-split}, we use the decay hypothesis \eqref{eq:mu-decay} to obtain
  \[\sum_{m=-k+1}^{+\infty}\int_{S_m}|\widehat{\mu}(x)|^2 p^{m(s-n)}\,dx
  \lesssim \sum_{m=-k+1}^{+\infty}\int_{S_m} p^{(s-t-n)m}\,dx
  \lesssim \sum_{m=0}^{+\infty}p^{(s-t)m} <+\infty,\]
where the last series converges since $s<t$. The first sum in \eqref{eq:energy-split} is a convergent geometric series as well, because $s> 0$. Hence $I_s^p(\mu)<+\infty$ for all $s\in(0,t)$. Applying Theorem~\ref{hwdl}, we conclude that $\dim_H(A)\ge t$, which completes the proof.
\end{proof}
\section{Fourier dimension of $p$-adic Riesz products}
We first consider the case that $\widehat{\mu}_{p,a,\ell}(\xi)$ is almost a monomial for all $|\xi|_p$ large enough.
\begin{proposition}\label{rpfws1}
  If $p\ge 3$ or $p=2$ and $\ell_{i+1}-\ell_{i}\ge 2$ for all $i$ large enough, then
  \[\dim_F(\mu_{p,a,\ell})=2\underset{n\to +\infty}{\varliminf} \frac{\log(|a_n|)}{-\ell_n \log(p)}.\]
\end{proposition}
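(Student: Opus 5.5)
The plan is to read the Fourier dimension straight off the explicit Fourier coefficients \eqref{rpfxs}. Put
\[d:=\underset{n\to+\infty}{\varliminf}\frac{\log|a_n|}{-\ell_n\log p}\in[0,+\infty].\]
I will show two things: $|\widehat{\mu}_{p,a,\ell}(\xi)|\lesssim|\xi|_p^{-s/2}$ for every $s<2d$, and this bound fails for every $s>2d$, by testing on the frequencies $\xi=p^{-\ell_m}$. The only complication is that the gap condition $\ell_{i+1}-\ell_i\ge 2$ (when $p=2$) is assumed only for $i\ge N$. So \eqref{rpfxs} cannot be applied to the whole product, and I will split off a finite head.

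\textbf{Decomposition.} Write $d\mu_{p,a,\ell}=P_N(x)\,T(x)\,dx$. Here $P_N(x)=\prod_{k\le N}(1+\mathbf{Re}\,a_k\gamma_{\ell_k}(x))$ is a trigonometric polynomial. Its spectrum lies in $p^{-\ell_N}\mathbb{Z}_p/\mathbb{Z}_p$, its coefficients satisfy $\widehat{P}_N(0)=1$, and $\|\widehat P_N\|_{\ell^1}\le C_N$. The factor $T$ is the Riesz product of the tail $(a_k,\ell_k)_{k>N}$. Since the tail satisfies the gap hypothesis, \eqref{rpfxs} applies to $T$: $\widehat T$ is supported on the sums $\sum_{k=N+1}^{m}\varepsilon_kp^{-\ell_k}$ and takes the value $\prod_k a_k^{(\varepsilon_k)}$ there. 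Then
\[\widehat{\mu}_{p,a,\ell}(\xi)=\sum_{\eta}\widehat P_N(\eta)\,\widehat T(\xi-\eta).\]

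\textbf{Lower bound $\dim_F\ge 2d$.} Take $|\xi|_p>p^{\ell_N}$ and $\eta$ in the spectrum of $P_N$. Then $|\xi-\eta|_p=|\xi|_p$. If $\widehat T(\xi-\eta)\ne0$, then $\xi-\eta=\sum_{k=N+1}^m\varepsilon_kp^{-\ell_k}$ with $\varepsilon_m\neq0$. Since $\ell_m$ strictly dominates the other exponents, $|\xi-\eta|_p=p^{\ell_m}$; this also holds for $p=2$ with $\varepsilon_m=-1$. Every other factor in the product is at most $1$ in modulus, so $|\widehat T(\xi-\eta)|\le|a_m|/2$. Summing over $\eta$ gives
\[|\widehat{\mu}_{p,a,\ell}(\xi)|\le C_N|a_m|/2,\qquad m\ \text{determined by}\ |\xi|_p=p^{\ell_m}.\]
Now fix $s<2d$. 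For $m$ large we have $|a_m|\le p^{-\ell_m s/2}=|\xi|_p^{-s/2}$. The finitely many remaining $m$, the region $|\xi|_p\le p^{\ell_N}$, and the frequencies $\xi$ whose norm is not of the form $p^{\ell_m}$ (where $\widehat\mu_{p,a,\ell}(\xi)=0$) are absorbed into the implied constant.

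\textbf{Upper bound $\dim_F\le 2d$.} I will show $\widehat{\mu}_{p,a,\ell}(p^{-\ell_m})=a_m/2$ for all $m>N$. Given this, take $s>2d$. Along a subsequence $|a_m|\ge p^{-\ell_m(s/2-\delta)}$ for some $\delta>0$, which is incompatible with $|a_m|/2\lesssim p^{-\ell_m s/2}$. To prove the identity, it suffices to show that in the convolution only $\eta=0$ contributes, since $\widehat P_N(0)=1$ and $\widehat T(p^{-\ell_m})=a_m/2$ by \eqref{rpfxs}. In other words, I need the following uniqueness statement. If a tail sum $\sum_{k>N}\varepsilon_kp^{-\ell_k}$ is congruent to $p^{-\ell_m}$ modulo $p^{-\ell_N}\mathbb{Z}_p$, then it equals $p^{-\ell_m}$.

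This uniqueness step is where I expect the main difficulty, and only for $p=2$. There $-2^{-\ell}$ has a fractional part with nonzero digits at all levels $\ell,\ell-1,\dots,1$, so carries can in principle interact across levels. I would prove it by induction from the top index downward. The level-$\ell_m$ digit, together with the gaps $\ell_{k+1}-\ell_k\ge2$ for $k\ge N$, forces $\varepsilon_m=1$. It also forces $\varepsilon_k=0$ for all $k>m$, since otherwise the norm would be too large. Subtracting $p^{-\ell_m}$ leaves a tail sum lying in $p^{-\ell_N}\mathbb{Z}_p$, that is, of norm at most $p^{\ell_N}<p^{\ell_{N+1}}$. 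By the norm computation above, a nonzero tail sum has norm at least $p^{\ell_{N+1}}$, so the remaining sum is zero. This is exactly the separation property that makes \eqref{rpfxs} valid, so I expect to reuse that argument. For $p\ge3$ the statement is immediate, because the digits $\pm1$ do not carry.
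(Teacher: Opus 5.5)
Your proof is correct. Its skeleton is the paper's: one inequality comes from evaluating $\widehat{\mu}_{p,a,\ell}$ at the test frequencies $p^{-\ell_m}$. The other comes from bounding $|\widehat{\mu}_{p,a,\ell}|$ on each sphere $S_{\ell_m}$ by a multiple of $|a_m|$, using that it vanishes off $\bigcup_m S_{\ell_m}$.

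What you add is the head/tail factorization $\widehat{\mu}_{p,a,\ell}=\widehat{P}_N*\widehat{T}$. The paper instead reads everything off \eqref{rpfxs} for the full product, although for $p=2$ that formula is only asserted when $\ell_{i+1}-\ell_i\ge 2$ for every $i$.

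\begin{itemize}
\item For the test frequencies this does no harm. The paper's remark that $\widehat{\mu}_{2,a,\ell}(2^{-\ell_{i_0+1}})=a_{i_0+1}/2$ whenever $\ell_{i_0+1}\ge \ell_{i_0}+2$ covers them, and it is essentially your uniqueness step.
\item For the sphere bound, the appeal to \eqref{rpfxs} does not literally apply when early exponents are consecutive. A frequency in $S_{\ell_n}$ can then have several representations $\sum_i\varepsilon_i2^{-\ell_i}$.
\item There is also a degenerate case: if $\ell_1=1$, then $\gamma_{\ell_1}$ is real-valued, and the coefficient at $2^{-1}$ is $\mathbf{Re}\,a_1$ rather than $a_1/2$.
\end{itemize}

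Your split handles both issues cleanly, since the head is an arbitrary trigonometric polynomial and the tail exponents are at least $2$. The price is only the constant $C_N$.

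The paper's version is shorter and claims the sharper constant $|a_n|/2$. You can in fact recover that constant. Your separation argument, applied to two tail sums, shows that at most one $\eta$ in the spectrum of $P_N$ contributes to $\widehat{\mu}_{p,a,\ell}(\xi)$. Moreover $|\widehat{P}_N(\eta)|\le\int P_N\,dx=1$, because $P_N\ge 0$.
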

\begin{proof}
Let $\dim_F(\mu_{p,a,\ell})=s$. We first establish the lower bound
\begin{equation}\label{eq:lower-bound}
\underset{n\to +\infty}{\varliminf}\frac{\log |a_n|}{-\ell_n\log p}\ge \frac{s}{2}.
\end{equation}
By the definition of Fourier dimension, for every $\varepsilon>0$ there exists a constant $C_\varepsilon>0$ such that
\begin{align*}
|\widehat{\mu}_{p,a,\ell}(\xi)|\le C_\varepsilon\,|\xi|_p^{-\frac{s-\varepsilon}{2}},\qquad \forall\,\xi\in\widehat{\mathbb{Z}}_p.
\end{align*}
We now distinguish two cases.
\begin{enumerate}[i)]
  \item Suppose $p\ge 3$. Set $\xi_n=p^{-\ell_n}$ for each $n\in\mathbb{N}$. Then, by \eqref{rpfxs}, we have
  \[\widehat{\mu}_{p,a,\ell}(\xi_n)=\frac{a_n}{2}.\]
  \item Suppose $p=2$ and $\ell_{i+1}-\ell_i\ge 2$ for all sufficiently large $i$. There exists $k\in\mathbb{N}$ such that $\ell_{n+1}\ge \ell_n+2$ for all $n\ge k$. For each $n\ge k$, set $\xi_n=2^{-\ell_n}$. Then by \eqref{rpfxs}, we have
  \[\widehat{\mu}_{2,a,\ell}(\xi_n)=\frac{a_n}{2}.\]
\end{enumerate}
In either case, for all sufficiently large $n$ we have
\[
\frac{|a_n|}{2}\,|\xi_n|_p^{\frac{s-\varepsilon}{2}}
=|\widehat{\mu}_{p,a,\ell}(\xi_n)|\,|\xi_n|_p^{\frac{s-\varepsilon}{2}}
\le C_\varepsilon.
\]
Since $|\xi_n|_p=p^{\ell_n}$, this yields
\[
|a_n|\le 2C_\varepsilon\, p^{-\ell_n\cdot\frac{s-\varepsilon}{2}},
\]
and consequently
\[
\frac{\log |a_n|}{-\ell_n\log p}\ge \frac{s-\varepsilon}{2}-\frac{\log(2C_\varepsilon)}{\ell_n\log p}.
\]
Taking the limit inferior as $n\to\infty$ gives
\[
\underset{n\to +\infty}{\varliminf}\frac{\log |a_n|}{-\ell_n\log p}\ge \frac{s-\varepsilon}{2}.
\]
Letting $\varepsilon\to 0^+$, we obtain \eqref{eq:lower-bound}.
We now prove the reverse inequality
\begin{equation}\label{eq:upper-bound}
\underset{n\to +\infty}{\varliminf}\frac{\log |a_n|}{-\ell_n\log p}\le \frac{s}{2}
\end{equation}
by contradiction. Suppose that
\[
\underset{n\to +\infty}{\varliminf}\frac{\log |a_n|}{-\ell_n\log p}>\frac{s}{2}.
\]
Then, there exist $\varepsilon>0$ and $N\in\mathbb{N}$ such that
\begin{equation}\label{eq:an-contra}
|a_n|<\frac{1}{2}\,p^{-\ell_n\cdot\frac{s+\varepsilon}{2}},\qquad \forall\,n\ge N.
\end{equation}
For any $\xi\in S_{\ell_n}$ with $n\ge N$, it follows from \eqref{rpfxs} that
\[
|\widehat{\mu}_{p,a,\ell}(\xi)|\le \frac{|a_n|}{2}.
\]
Combining this with \eqref{eq:an-contra} and using $|\xi|_p=p^{\ell_n}$ for $\xi\in S_{\ell_n}$, we obtain
\[
|\widehat{\mu}_{p,a,\ell}(\xi)|<\frac{1}{4}\,|\xi|_p^{-\frac{s+\varepsilon}{2}}.
\]
Since $|\widehat{\mu}_{p,a,\ell}(\xi)|=0$ for all $\xi\notin \bigcup_{n=1}^{+\infty} S_{\ell_n}$, we have $\dim_F(\mu_{p,a,\ell})\ge s+\varepsilon>s$. This leads to a contradiction with the definition of $s$. Therefore, we establish \eqref{eq:upper-bound} and complete the proof.
\end{proof}
We now prove Corollary \ref{jdlx} using Propositions \ref{rpac} and \ref{rpfws1}.
\begin{proof}[Proof of Corollary \ref{jdlx}]
  Let $\dim_F(\mu_{p,a,\ell})=a\ \text{and}\ \dim_F(\mu_{p,b,\ell})=b$ with $a\le b$. Since $a,\ b>0$, by Proposition \ref{rpfws1}, there exists a $K\in\mathbb{N}$ such that $|a_k|\le p^{-\frac{a\ell_k}{2}}$ and $|b_k|\le p^{-\frac{b\ell_k}{2}}$ for any $k\ge K$. Note that 
  \[p^{\ell_k-\ell_{k-1}}(2-|a_k+b_k|)^2\ge p(2-|a_k|-|b_k|)^2.\]
  Then,
  \[\lim_{k\to +\infty}p^{\ell_k-\ell_{k-1}}(2-|a_k+b_k|)^2\ge 4p.\]
  Therefore, we have $p^{\ell_k-\ell_{k-1}}(2-|a_k+b_k|)^2>2p$ for all $k$ large. Since
  \begin{align*}
    \sum_{k=1}^{+\infty}|a_k-b_k|^2\left(1+\frac{\cos ^2(s_k-t_k)}{\sqrt{2-|a_k+b_k|}}\right) \le & \sum_{k=1}^{+\infty}(|a_k|+|b_k|)^2\left(1+\frac{1}{\sqrt{2-|a_k|-|b_k|}}\right)\\
    \le & \sum_{k=1}^{+\infty} 4p^{-ak}\left(1+\frac{1}{\sqrt{2-2p^{-ak}}}\right)<+\infty,
  \end{align*}
  by Proposition \ref{rpac}, we complete the proof.
\end{proof}
When $p=2$ and $\ell_{i+1}=\ell_i+1$ for infinitely many $i$, the main difficulty lies in obtaining a uniform estimate for $|\widehat{\mu}_{2,a,\ell}(\xi)|$ for all sufficiently large $|\xi|_p$. Indeed, the Fourier transform $\widehat{\mu}_{2,a,\ell}(\xi)$ is represented as a sum of complex numbers, and the number of summands may tend to infinity as $|\xi|_p\to+\infty$. \par
For any $m\ge 1$ and $k\in\mathbb{R}$, let
  \[E_m(k)=\left\{(\varepsilon_1,\cdots ,\varepsilon_m)\in \{-1,0,1\}^m: \sum_{i=1}^{m}\varepsilon_i2^{-i}=k\right\}.\]
Now, we prove some propositions of $E_m(k)$ which play an important role in the proof of Theorem \ref{thm1.5}.
\begin{proposition}\label{cd}
  Suppose $k=\sum_{i=1}^{n}c_i2^{-i}$ with $c_i\in \{0,1\}$ for all $1\le i\le n$ and $c_n=1$. Then, we have 
  \begin{enumerate}[i)]
  \item If $m\le n-1$, then $E_m(k)=\emptyset$.
  \item If $m\ge n$, then every $(\varepsilon_1,\dots,\varepsilon_m)\in E_m(k)$ satisfies
  \[
  |\varepsilon_n|=1\quad\text{and}\quad\varepsilon_j=0\quad \text{for all}\ j\ge n+1.
  \]
  \item $\#E_m(2^{-n})=n$ for every $m\ge n$.
\end{enumerate}
\end{proposition}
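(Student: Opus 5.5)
The natural tool is to multiply the defining relation by $2^m$ and read it modulo powers of $2$. For $(\varepsilon_1,\dots,\varepsilon_m)\in E_m(k)$ this gives
\[
\sum_{i=1}^m \varepsilon_i 2^{m-i} = 2^m k = \sum_{i=1}^n c_i 2^{m-i},
\]
an identity between integers whenever $m\ge n$. For $m\le n-1$ I would instead multiply by $2^{n-1}$. The left side is then an integer, while the right side is $2^{n-1}k=\text{integer}+c_n/2$, which is not an integer. Hence no such tuple exists, which proves i).

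For ii), with $m\ge n$, I would let $j_0$ be the largest index with $\varepsilon_{j_0}\neq 0$, which exists since $k\neq 0$. Multiplying the identity by $2^{j_0}$ makes the left side an odd integer. On the right side, $2^{j_0}k$ is an integer exactly when $j_0\ge n$, and it is even when $j_0>n$. So $j_0<n$ is impossible, because then the right side is not an integer. Also $j_0>n$ is impossible, because then the right side is even while the left is odd. Thus $j_0=n$, so $|\varepsilon_n|=1$ and $\varepsilon_j=0$ for $j>n$.

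For iii), ii) reduces the count to tuples of length $n$ with $\sum_{i=1}^n \varepsilon_i 2^{n-i}=1$ and $\varepsilon_n=\pm1$; the trailing zeros carry no freedom. I would then induct on $n$. For $n=1$ the only solution is $\varepsilon_1=1$. For the inductive step I would split on $\varepsilon_n$.
\begin{enumerate}[i)]
\item If $\varepsilon_n=1$, the remaining entries satisfy $\sum_{i=1}^{n-1}\varepsilon_i2^{n-i}=0$, and the same parity argument at the last nonzero index forces them all to be $0$. This case contributes exactly one tuple.
\item If $\varepsilon_n=-1$, dividing by $2$ gives $\sum_{i=1}^{n-1}\varepsilon_i 2^{n-1-i}=1$. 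This is exactly the condition $(\varepsilon_1,\dots,\varepsilon_{n-1})\in E_{n-1}(2^{-(n-1)})$, and by ii) it automatically has $\varepsilon_{n-1}=\pm1$. By induction this case contributes $n-1$ tuples.
\end{enumerate}
The total is $n$. Explicitly, the solutions are $2^{-n}=2^{-j}-\sum_{i=j+1}^{n}2^{-i}$ for $j=1,\dots,n$.

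I expect no real obstacle. The one point needing care is the parity bookkeeping in ii), in particular handling the case $j_0>n$ separately via evenness of the right-hand side.
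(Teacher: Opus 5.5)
Your proposal is correct and takes essentially the paper's route: integrality and parity after scaling by a power of $2$ for i) and ii), and for iii) a split on $\varepsilon_n=\pm1$ that recurses to $E_{n-1}(2^{-(n-1)})$. Two small improvements: taking the largest nonzero index $j_0$ in ii) gives both conclusions at once, and your explicit induction in iii) makes rigorous the paper's ``repeating the argument recursively'' and its claim that there are no other solutions.
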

\begin{proof}
  (i) Suppose $(\varepsilon_1,\dots,\varepsilon_m)\in E_m(k)$. By the definition of $E_m(k)$, we have $2^mk\in \mathbb{Z}$. On the other hand,
  \[
    2^m k=\sum_{i=m+1}^{n}c_i2^{m-i}+\sum_{i=1}^{m}c_i2^{m-i}.
  \]
 Since $n-m\ge 1$, we have 
 \[0<2^{m-n}\le\sum_{i=m+1}^{n}c_i2^{m-i}\le \sum_{i=1}^{n-m}2^{-i}<1.\] 
 Therefore, $2^mk\notin \mathbb{Z}$. This leads to a contradiction.\\
 (ii) Let $m\ge n$ and $(\varepsilon_1,\cdots,\varepsilon_m)\in E_m(k)$. Multiplying by $2^n$ gives
\begin{equation*}
\sum_{i=1}^{m}\varepsilon_i2^{n-i}=1+\sum_{i=1}^{n-1}c_i2^{n-i}.
\end{equation*}
Note that the right-hand side is odd. Therefore,
\[\varepsilon_n+\sum_{i=n+1}^{m}\varepsilon_i2^{n-i}\]
should be odd too.
Since 
\[\left|\sum_{i=n+1}^{m}\varepsilon_i2^{n-i}\right|\le\sum_{j=1}^{m-n}2^{-j}<1,\] 
we have $|\varepsilon_n|=1$ and $\sum_{i=n+1}^{m}\varepsilon_i2^{n-i}=0$.

Suppose there exists some $\varepsilon_j\neq0$ for $j\ge n+1$. Then, we can choose $j_0$ to be the largest such index. Hence,
\[
0=\varepsilon_{j_0}+\sum_{i=n+1}^{j_0-1}\varepsilon_i2^{j_0-i}.
\]
Therefore, we have that $\varepsilon_{j_0}$ is even, which leads to a contradiction that $\varepsilon_{j_0}\in\{-1,1\}$. Therefore $\varepsilon_j=0$ for all $j\ge n+1$.
\\
(iii) Take $k=2^{-n}$. By (ii), any $(\varepsilon_1,\cdots,\varepsilon_m)\in E_m(2^{-n})$ satisfies $|\varepsilon_n|=1$ and $\varepsilon_j=0$ for $j>n$.

If $\varepsilon_n=1$, then $\sum_{i=1}^{n-1}\varepsilon_i2^{-i}=0$, and the same parity argument as in (ii) yields $\varepsilon_1=\dots=\varepsilon_{n-1}=0$. This gives the solution $v^{(n)}=(0,\dots,0,1,0,\dots,0)$.

If $\varepsilon_n=-1$, then $\sum_{i=1}^{n-1}\varepsilon_i2^{-i}=2^{-(n-1)}$. Repeating the argument recursively, for each $j\in\{1,\dots,n\}$ there is exactly one solution
\[
v^{(j)}=(\underbrace{0,\dots,0}_{j-1},\,1,\,\underbrace{-1,\dots,-1}_{n-j},\,0,\dots,0),
\]
where the entry $1$ occurs in position $j$ and all subsequent entries up to position $n$ equal $-1$. Direct verification shows
\[
\sum_{i=1}^{n}v^{(j)}_i2^{-i}=2^{-j}-\sum_{i=j+1}^{n}2^{-i}=2^{-n}.
\]
These $n$ solutions are distinct, and the recursive analysis shows there are no others. Hence, $\#E_m(2^{-n})=n$ for all $m\ge n$.
\end{proof}
The following lemma provides an upper bound for any $|\widehat{\mu}_{2,a,\ell}(\xi)|$ for $|\xi|_p\in S_{\ell_n}$.
\begin{lemma}\label{upper bound of measure}
 For any $n\ge 1$, we have 
 \[\left|\widehat{\mu}_{2,a,\ell}(\xi)\right|\le |a_n|,\ \forall \xi\in S_{\ell_n}.\]
\end{lemma}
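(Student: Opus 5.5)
The plan is to reduce to finite partial products, expand them into characters, and use a $2$-adic valuation argument to isolate the $n$-th factor. For $N\ge 1$ let
\[
d\mu_N(x)=\prod_{k=1}^{N}\bigl(1+\mathbf{Re}\, a_k\gamma_{\ell_k}(x)\bigr)dx .
\]
Since $|a_k|\le 1$, each factor is nonnegative with integral $1$ over $\mathbb{Z}_2$, so $\mu_N$ is a probability measure. The $p$-adic Riesz product $\mu_{2,a,\ell}$ is the weak limit of the $\mu_N$. Hence $\widehat{\mu}_{2,a,\ell}(\xi)=\lim_{N\to\infty}\widehat{\mu}_N(\xi)$ for every $\xi\in\widehat{\mathbb{Z}}_2$, and it suffices to prove $|\widehat{\mu}_N(\xi)|\le |a_n|$ for all $N\ge n$ and $\xi\in S_{\ell_n}$.

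Writing $1+\mathbf{Re}\,a\gamma=1+\frac a2\gamma+\frac{\bar a}{2}\bar\gamma$ and expanding, we get
\[
\widehat{\mu}_N(\xi)=\sum_{\varepsilon}\ \prod_{k=1}^{N}a_k^{(\varepsilon_k)},
\]
where the sum runs over $\varepsilon\in\{-1,0,1\}^N$ with $\sum_{k}\varepsilon_k2^{-\ell_k}\equiv\xi \pmod{\mathbb{Z}_2}$. This uses only the orthogonality of characters on $\mathbb{Z}_2$ and no gap condition on the $\ell_k$, unlike \eqref{rpfxs}, which required gaps so that each $\xi$ had at most one representation.

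The key step is the same parity idea as Proposition \ref{cd}(ii). Suppose $\varepsilon\neq 0$ and let $j$ be its largest nonzero index. Then $|\sum_k\varepsilon_k2^{-\ell_k}|_2=2^{\ell_j}$, because the remaining terms all have absolute value at most $2^{\ell_{j}-1}$ by the ultrametric inequality and $\ell_k<\ell_j$. Since $|\xi|_2=2^{\ell_n}$, every contributing $\varepsilon$ must satisfy $\varepsilon_n=\pm1$ and $\varepsilon_k=0$ for all $k>n$. Grouping the terms by the sign of $\varepsilon_n$ gives
\[
\widehat{\mu}_N(\xi)=\frac{a_n}{2}\,\widehat{\mu}_{n-1}\bigl(\xi-2^{-\ell_n}\bigr)+\frac{\bar a_n}{2}\,\widehat{\mu}_{n-1}\bigl(\xi+2^{-\ell_n}\bigr),
\]
where $\mu_0$ is Haar measure. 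This holds because the constrained sum over $(\varepsilon_1,\dots,\varepsilon_{n-1})$ is exactly the same expansion applied to $\mu_{n-1}$ at the shifted frequency.

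Since $\mu_{n-1}$ is a probability measure, $|\widehat{\mu}_{n-1}|\le 1$ everywhere. Therefore $|\widehat{\mu}_N(\xi)|\le \frac{|a_n|}{2}+\frac{|a_n|}{2}=|a_n|$, and letting $N\to\infty$ gives the lemma.

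I expect the only delicate point to be the valuation step: the possibly unbounded number of representations of $\xi$ is absorbed into the Fourier coefficients of the probability measure $\mu_{n-1}$, which is where the nonnegativity $|a_k|\le1$ is used. The passage to the limit is a secondary point; I would justify it by weak convergence, since the characters are continuous on $\mathbb{Z}_2$.
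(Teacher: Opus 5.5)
Your proof is correct and is essentially the paper's argument. Both expand the coefficient over sign patterns, use the parity/valuation fact of Proposition~\ref{cd}(ii) that the top nonzero index must be $n$, split according to the sign of $\varepsilon_n$, and bound each half by a Fourier coefficient of a probability measure. The paper applies the triangle inequality first and re-sums through the auxiliary product $\mu_{2,a',\ell}$ with $a'_k=|a_k|$. Your exact recursion through $\mu_{n-1}$ is cleaner: it is computed modulo $\mathbb{Z}_2$ for arbitrary $\xi\in S_{\ell_n}$, so it avoids the paper's reduction to $\xi=\sum_i c_i2^{-\ell_i}$.

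There is one slip in justification. You deduce $\mu_N(\mathbb{Z}_2)=1$ from each factor having integral $1$, but the integral of a product need not equal the product of the integrals. The fact itself is true, and you can read it off from your own expansion at $\xi=0$, where the valuation argument leaves only $\varepsilon=0$.
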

\begin{proof}
  By \eqref{rpfxs}, without loss of generality, we can assume that $\xi=\sum_{i=1}^{n}c_i2^{-\ell_i}$ with $c_i\in \{0,1\}$ for all $1\le i\le n$ and $c_n=1$. By Lemma \ref{cd} and \eqref{rpfxs}, we have 
  \[\widehat{\mu}_{2,a,\ell}(\xi)=\underset{(\varepsilon_1,\cdots,\varepsilon_{\ell_n})\in E_{\ell_n}(\xi)}{\sum}\prod_{i=1}^{n}\left(\frac{a_i}{2}\right)^{(\varepsilon_{\ell_i})}.\]
  By the definition of $a^{(\varepsilon)}$, we have $|a^{(\varepsilon)}|=|a|^{|\varepsilon|}$.
  Hence,
  \[\left|\widehat{\mu}_{2,a,\ell}(\xi)\right|\le\underset{(\varepsilon_1,\cdots,\varepsilon_{\ell_n})\in E_{\ell_n}(\xi)}{\sum}\prod_{i=1}^{n}\left(\frac{|a_i|}{2}\right)^{|\varepsilon_{\ell_i}|}.\]
  For any $(\varepsilon_1,\cdots,\varepsilon_{\ell_n})\in E_{\ell_n}(\xi)$, by Lemma \ref{cd}, we have $|\varepsilon_{\ell_n}|=1$. Therefore,
  \[\underset{(\varepsilon_1,\cdots,\varepsilon_{\ell_n})\in E_{\ell_n}(\xi)}{\sum}\prod_{i=1}^{n}\left(\frac{|a_i|}{2}\right)^{|\varepsilon_{\ell_i}|}=\frac{|a_n|}{2}\underset{(\varepsilon_1,\cdots,\varepsilon_{\ell_n-1})\in E_{\ell_n-1}(\xi_1)\cup E_{\ell_n-1}(\xi_2)}{\sum}\prod_{i=1}^{n-1}\left(\frac{|a_i|}{2}\right)^{|\varepsilon_{\ell_i}|},\]
  where $\xi_1=\xi-2^{\ell_n}$ and $\xi_2=\xi+2^{\ell_n}$.
  Let $\{a_k^{\prime}\}$ be a sequence with $a_k^{\prime}=|a_k|$ for all $k\ge 1$. Note that for any $j\in \{1,2\}$, we have
  \[\underset{(\varepsilon_1,\cdots,\varepsilon_{\ell_n-1})\in E_{\ell_n-1}(\xi_j)}{\sum}\prod_{i=1}^{n-1}\left(\frac{|a_i|}{2}\right)^{|\varepsilon_{\ell_i}|}=|\mu_{2,a^{\prime},\ell}(\xi_i)|\le 1.\]
  Therefore, we have 
  \[\left|\widehat{\mu}_{2,a,\ell}(\xi)\right|\le \frac{|a_n|}{2}\cdot (1+1)=|a_n|.\]
\end{proof}
Now, we are at the position to prove the lower bound of $\dim_F(\mu_{2,a,\ell})$.
\begin{proposition}\label{proof of lower bound}
  \[\dim_F(\mu_{2,a,\ell})\ge 2\underset{n\to +\infty}{\varliminf} \frac{\log(|a_n|)}{-\ell_n \log(2)}.\]
\end{proposition}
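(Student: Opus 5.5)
Set $\alpha=\underset{n\to +\infty}{\varliminf}\frac{\log(|a_n|)}{-\ell_n\log(2)}$. The plan is to turn the pointwise bound of Lemma \ref{upper bound of measure} into a uniform decay estimate $|\widehat{\mu}_{2,a,\ell}(\xi)|\lesssim|\xi|_p^{-\frac{s}{2}}$ for every $s<2\alpha$. If $\alpha=0$ there is nothing to prove, since $\dim_F\ge 0$ trivially ($|\widehat{\mu}_{2,a,\ell}|\le \mu_{2,a,\ell}(\mathbb{Z}_2)=1$). So assume $\alpha>0$ and fix $0<s<2\alpha$, and put $\delta=\alpha-\frac{s}{2}>0$. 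By the definition of the lower limit, there is $N$ such that for all $n\ge N$,
\[
|a_n|\le 2^{-\ell_n(\alpha-\delta/2)}\le 2^{-\ell_n\cdot\frac{s}{2}}.
\]

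Next we describe where $\widehat{\mu}_{2,a,\ell}$ is supported. Let $\xi\in\widehat{\mathbb{Z}}_2$ with $\widehat{\mu}_{2,a,\ell}(\xi)\neq 0$. Expanding the product defining $\mu_{2,a,\ell}$ (as done in Lemma \ref{upper bound of measure}), $\xi$ must be of the form $\sum_i\varepsilon_i2^{-\ell_i}$ with finitely many nonzero $\varepsilon_i\in\{-1,0,1\}$. Write $\xi=\sum_{i=1}^{n}c_i2^{-\ell_i}$ in the reduced form used in Lemma \ref{upper bound of measure}, with $c_n=1$; this means $|\xi|_p=2^{\ell_n}$. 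By Proposition \ref{cd} (i),(ii), the top index $\ell_n$ is forced to carry a nonzero $\varepsilon$, while no larger index $\ell_j$ can. Consequently every $\xi$ with nonzero Fourier coefficient lies in $\bigcup_n S_{\ell_n}$. I would state this explicitly, using the Proposition \ref{cd} argument with the indices $\ell_i$ in place of $i$: the sum $\sum\varepsilon_i2^{-\ell_i}$ has 2-adic absolute value exactly $2^{\ell_{j}}$, where $j$ is the largest index with $\varepsilon_j\ne0$.

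With this in hand, the decay bound follows. For $\xi\in S_{\ell_n}$ with $n\ge N$, Lemma \ref{upper bound of measure} gives
\[
|\widehat{\mu}_{2,a,\ell}(\xi)|\le |a_n|\le 2^{-\ell_n\frac{s}{2}}=|\xi|_p^{-\frac{s}{2}}.
\]
The remaining $\xi$ with nonzero coefficient satisfy $|\xi|_p\le 2^{\ell_{N}}$, a bounded range, and there the trivial bound $|\widehat{\mu}_{2,a,\ell}|\le 1$ yields $|\widehat{\mu}_{2,a,\ell}(\xi)|\le 2^{\ell_N s/2}|\xi|_p^{-\frac{s}{2}}$. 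At all other $\xi$ the coefficient vanishes. Hence $|\widehat{\mu}_{2,a,\ell}(\xi)|\lesssim|\xi|_p^{-\frac{s}{2}}$ on all of $\widehat{\mathbb{Z}}_2$, so $\dim_F(\mu_{2,a,\ell})\ge s$. Letting $s\uparrow 2\alpha$ gives the proposition.

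The main obstacle is the support statement: checking that a frequency in $S_{\ell_n}$ genuinely comes from $\xi$ with top term $2^{-\ell_n}$, even when consecutive $\ell_i$ are adjacent and carries occur. The point is that a carry can never push the absolute value beyond the largest $\ell_j$ used, and the strict inequality $\sum_{i>j}2^{\ell_j-\ell_i}<1$ (as in Proposition \ref{cd}) rules out cancellation of the top term. Everything else is bookkeeping with Lemma \ref{upper bound of measure}.
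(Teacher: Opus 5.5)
Your proposal is correct and follows the paper's route: you apply Lemma \ref{upper bound of measure} on each sphere $S_{\ell_n}$, use that $\widehat{\mu}_{2,a,\ell}$ vanishes off $\{0\}\cup\bigcup_n S_{\ell_n}$, and run the decay argument of Proposition \ref{rpfws1}, directly rather than by contradiction. You also spell out two steps the paper leaves implicit: the support claim, via the ultrametric fact $\bigl|\sum_{i\le j}\varepsilon_i2^{-\ell_i}\bigr|_2=2^{\ell_j}$ when $\varepsilon_j\neq0$, and the handling of the finitely many small frequencies.
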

\begin{proof}
  By Lemma \ref{proof of lower bound}, for any $\xi\in S_{\ell_n}$, we have 
  \[\left|\widehat{\mu}_{2,a,\ell}(\xi)\right|\le |a_n|.\]
  By the same argument in the proof of the lower bound in Proposition \ref{rpfws1}, we complete the proof.
\end{proof}
In order to prove the upper bound of $\dim_F(\mu_{2,a,\ell})$, we need to give a lower bound of $\widehat{\mu}_{2,a,\ell}(\xi)$ for some $\xi$.
\begin{lemma}\label{lbof}
  If $\underset{n\to +\infty}{\varliminf} |a_n|<1$, then there exists a sequence $\{t_k\}\subset \mathbb{N}$ such that 
  \[|\widehat{\mu}_{2,a,\ell}(2^{-\ell_{t_k}})|\gtrsim |a_{t_k}|,\ \text{for any}\ k\ \text{large enough}.\]
\end{lemma}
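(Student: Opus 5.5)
The plan is to compute $\widehat{\mu}_{2,a,\ell}(2^{-\ell_n})$ explicitly for general $n$, and then choose the indices $t_k$ just after an index where $|a|$ is small. Expanding the infinite product as in the derivation of \eqref{rpfxs} (without the gap assumption), $\widehat{\mu}_{2,a,\ell}(\xi)$ is a sum over finitely supported $(\varepsilon_i)\in\{-1,0,1\}^{\mathbb{N}}$ with $\sum_i \varepsilon_i 2^{-\ell_i}=\xi$ in $\mathbb{Q}_2/\mathbb{Z}_2$. Each such sign sequence contributes $\prod_i a_i^{(\varepsilon_i)}$. Viewing $(\varepsilon_i)$ as a vector in $\{-1,0,1\}^{\ell_n}$ supported on $\{\ell_1,\dots,\ell_n\}$, part (ii) of Proposition \ref{cd} rules out nonzero signs at positions beyond $\ell_n$. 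Part (iii) then says the only admissible vectors for $\xi=2^{-\ell_n}$ are $v^{(j)}$: a $+1$ at position $j$ and $-1$ at every position $j+1,\dots,\ell_n$.

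Such a $v^{(j)}$ is supported on $\{\ell_i\}$ exactly when every integer in $[j,\ell_n]$ is some $\ell_i$. That is, $j=\ell_{n-r}$ with $\ell_{n-r}=\ell_n-r$, where $0\le r\le R_n$ and $R_n$ is the length of the run of consecutive $\ell$'s ending at $\ell_n$. This gives the closed formula
\[
\widehat{\mu}_{2,a,\ell}(2^{-\ell_n})=\frac{a_n}{2}+\frac{\bar a_n}{2}\,S_n,\qquad S_n=\sum_{r=1}^{R_n}\frac{a_{n-r}}{2}\prod_{i=n-r+1}^{n-1}\frac{\bar a_i}{2},
\]
with $S_n=0$ if $R_n=0$. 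I would double-check the conjugation convention in $a^{(\varepsilon)}$, but only moduli matter below.

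Next I bound $S_n$. Its $r=1$ term is $a_{n-1}/2$, and every term with $r\ge2$ contains the factor $\bar a_{n-1}/2$. Using $|a_i|\le1$, this gives
\[
|S_n|\le \frac{|a_{n-1}|}{2}+\frac{|a_{n-1}|}{2}\sum_{r\ge2}2^{-(r-2)}\cdot\frac12\le |a_{n-1}|,
\]
and hence
\[
|\widehat{\mu}_{2,a,\ell}(2^{-\ell_n})|\ge \frac{|a_n|}{2}\bigl(1-|a_{n-1}|\bigr).
\]
Alternatively, $S_n$ has the same recursive shape as the sum in Lemma \ref{upper bound of measure}, so that argument can be reused to get $|S_n|\le |a_{n-1}|$.

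Finally, since $\varliminf|a_n|<1$, fix $c<1$ and infinitely many indices $m_k\to\infty$ with $|a_{m_k}|<c$, and set $t_k=m_k+1$. Then
\[
|\widehat{\mu}_{2,a,\ell}(2^{-\ell_{t_k}})|\ge \frac{1-c}{2}\,|a_{t_k}|,
\]
which is the claimed bound with constant $(1-c)/2$ independent of $k$. If $\ell_{t_k}-\ell_{t_k-1}\ge2$, then $S_{t_k}=0$ and the bound $|a_{t_k}|/2$ is immediate, consistent with \eqref{rpfxs}.

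I expect the main obstacle to be the first step: justifying rigorously that the Fourier coefficient equals the sum over exactly these representations, including the reduction from infinitely many factors to the first $n$. Proposition \ref{cd} has to be applied to the positions $\ell_i$ rather than to $1,\dots,m$; I would handle this by padding with zeros at positions not of the form $\ell_i$.
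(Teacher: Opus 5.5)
Your argument is essentially the paper's. You expand $\widehat{\mu}_{2,a,\ell}(2^{-\ell_n})$ via Proposition~\ref{cd}(iii), factor out $\frac{a_n}{2}$, bound the remaining terms by $\frac{|a_n|}{2}|a_{n-1}|$, and take $t_k$ one index after a place where $|a_m|\le c<1$.

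The first step, which you rightly flagged, does need a correction; the paper's displayed formula has the same omission.
\begin{itemize}
\item The coefficient is a sum over sign vectors with $\sum_i\varepsilon_i2^{-\ell_i}\equiv 2^{-\ell_n}$ modulo $\mathbb{Z}_2$. However, $E_m(k)$ and Proposition~\ref{cd} concern exact equality in $\mathbb{Q}$.
\item Since $|\sum_i\varepsilon_i2^{-\ell_i}|<1$, the sum may also equal $2^{-\ell_n}-1=-\sum_{i=1}^{\ell_n}2^{-i}$. Apply Proposition~\ref{cd}(ii) to $-\varepsilon$, and note that $\sum_{i=1}^{\ell_n}2^{-i}$ is the largest value attainable on positions $1,\dots,\ell_n$. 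This forces $\varepsilon=-1$ at every one of those positions.
\item That vector is supported on $\{\ell_i\}$ exactly when $\ell_i=i$ for all $i\le n$. In that case your closed formula misses the term $\prod_{i=1}^{n}\frac{\bar a_i}{2}$.
\item For instance, with $\ell_1=1$ and $\ell_2=2$, a direct computation gives $\widehat{\mu}_{2,a,\ell}(1/4)=\frac{a_2}{2}+\frac{\bar a_2}{2}\,\mathrm{Re}\,a_1$, not $\frac{a_2}{2}+\frac{\bar a_2}{2}\cdot\frac{a_1}{2}$.
\end{itemize}

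The lemma itself is unaffected. For $n\ge 2$ the extra term has modulus at most $\frac{|a_n|}{2}\,|a_{n-1}|\,2^{-(n-1)}$. Your finite sum actually satisfies $|S_n|\le |a_{n-1}|\sum_{r=1}^{n-1}2^{-r}=|a_{n-1}|\bigl(1-2^{-(n-1)}\bigr)$. So everything other than $\frac{a_n}{2}$ still has modulus at most $\frac{|a_n|}{2}|a_{n-1}|$, and your bound $|\widehat{\mu}_{2,a,\ell}(2^{-\ell_n})|\ge \frac{|a_n|}{2}(1-|a_{n-1}|)$ holds. If you keep your cruder estimate $|S_n|\le |a_{n-1}|$, the extra term only replaces $|a_{n-1}|$ by $|a_{n-1}|(1+2^{-(n-1)})$, which is harmless for large $n$. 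You should state the enumeration modulo $\mathbb{Z}_2$ explicitly and account for this one extra vector.
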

\begin{proof}
  Since $\varliminf\limits_{n\to+\infty}|a_n|<1$, there exists a subsequence $\{n_k\}$ and a constant $a\in(0,1)$ such that $|a_{n_k}|\le a$ for all sufficiently large $k$. Set $t_k=n_k+1$, so that $|a_{t_k-1}|=|a_{n_k}|\le a$.

By part (iii) of Lemma~\ref{cd}, we have
\[
|\widehat{\mu}_{2,a,\ell}(2^{-\ell_{t_k}})|
=\left|\frac{a_{t_k}}{2}+\sum_{j=1}^{t_k-1}\lambda(j)\,
\frac{\overline{a_{t_k}}}{2}\frac{\overline{a_{t_k-1}}}{2}\cdots
\frac{\overline{a_{t_k-j+1}}}{2}\frac{a_{t_k-j}}{2}\right|,
\]
where $\lambda(j)=1$ if $\ell_{t_k-i}=t_k-i$ for all $1\le i\le j$, and $\lambda(j)=0$ otherwise.

Factor out $\frac{|a_{t_k}|}{2}$ and use the triangle inequality to obtain, for all large $k$,
\begin{align*}
|\widehat{\mu}_{2,a,\ell}(2^{-\ell_{t_k}})|
&=\frac{|a_{t_k}|}{2}\left|1+\sum_{j=1}^{t_k-1}\lambda(j)\,
\frac{\overline{a_{t_k-1}}}{2}\cdots
\frac{\overline{a_{t_k-j+1}}}{2}\frac{a_{t_k-j}}{2}\right|\\[4pt]
&\ge\frac{|a_{t_k}|}{2}\left(1-\sum_{j=1}^{t_k-1}
\frac{|a_{t_k-1}|}{2}\cdots
\frac{|a_{t_k-j+1}|}{2}\frac{|a_{t_k-j}|}{2}\right)\\[4pt]
&\ge\frac{|a_{t_k}|}{2}\left(1-|a_{t_k-1}|\sum_{j=1}^{t_k-1}\frac{1}{2^{\,j}}\right)\\[4pt]
&>\frac{|a_{t_k}|}{2}\bigl(1-|a_{t_k-1}|\bigr)
\ge\frac{1-a}{2}\,|a_{t_k}|.
\end{align*}
This completes the proof.
\end{proof}
\begin{lemma}\label{ybfs}
  Let $\{a_i\}\subset\mathbb{C}$ satisfy $0<|a_i|\le 1$ for all $i\ge 1$. Then there exist infinitely many positive integers $n$ such that
  \begin{align}\label{lb1}
    \left|\frac{a_n}{2}+\frac{\overline{a_n}}{2} \frac{a_{n-1}}{2}+\cdots+\frac{\overline{a_{n}}}{2}\frac{\overline{a_{n-1}}}{2}\cdots \frac{a_1}{2}\right|\gtrsim |a_n|.
  \end{align}
\end{lemma}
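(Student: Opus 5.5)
The plan is to recognize the left-hand side of \eqref{lb1} as a term of a simple recursion, and then show that it cannot be small for two consecutive indices. Define
\[
F_1=\frac{a_1}{2},\qquad F_n=\frac{a_n}{2}+\frac{\overline{a_n}}{2}\,F_{n-1}\quad(n\ge 2).
\]
Expanding the recursion shows that $F_n$ is exactly the expression inside the absolute value in \eqref{lb1}: the factor $\frac{\overline{a_n}}{2}$ multiplies $\frac{a_{n-1}}{2}+\frac{\overline{a_{n-1}}}{2}\frac{a_{n-2}}{2}+\cdots$, which is $F_{n-1}$. So the task becomes: show $|F_n|\ge c\,|a_n|$ for infinitely many $n$, with an absolute constant $c>0$.

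First I record the a priori bound $|F_n|\le 1$ for all $n$. It holds by induction, since $|F_1|\le \tfrac12$ and $|F_n|\le \tfrac12\bigl(1+|F_{n-1}|\bigr)\le 1$. Next I write $a_n=r_ne^{i\theta_n}$ with $0<r_n\le1$ and factor out $\frac{r_n}{2}e^{-i\theta_n}$:
\[
|F_n|=\frac{r_n}{2}\,\bigl|e^{2i\theta_n}+F_{n-1}\bigr| = \frac{|a_n|}{2}\,\bigl|e^{2i\theta_n}+F_{n-1}\bigr|.
\]
Hence \eqref{lb1} at index $n$ reduces to a lower bound for the distance from $F_{n-1}$ to the unimodular point $-e^{2i\theta_n}$.

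The key step is a dichotomy for consecutive indices, and I expect it to be the only real point of the proof. Fix $n\ge 2$. If $|e^{2i\theta_n}+F_{n-1}|\ge \tfrac12$, then $|F_n|\ge \tfrac14|a_n|$ and index $n$ is good. Otherwise $|e^{2i\theta_n}+F_{n-1}|<\tfrac12$, and the identity above together with $r_n\le 1$ gives $|F_n|<\tfrac14$. In that case the reverse triangle inequality at index $n+1$ yields
\[
\bigl|e^{2i\theta_{n+1}}+F_n\bigr|\ge 1-|F_n|>\tfrac34 ,
\]
so $|F_{n+1}|>\tfrac38|a_{n+1}|$ and index $n+1$ is good. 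Thus among every pair $\{n,n+1\}$ at least one index satisfies $|F_m|\ge \tfrac14|a_m|$. This produces infinitely many good indices, with implied constant $\tfrac14$, which proves the lemma.

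The one thing to check carefully is that the factorization holds in modulus: $|\overline{a_n}|=|a_n|$, and multiplying by the unimodular $e^{i\theta_n}$ does not change the absolute value. Beyond that, no information on the sequence $\{a_i\}$ other than $0<|a_i|\le 1$ is needed.
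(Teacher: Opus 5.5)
Your proof is correct and follows essentially the paper's argument. The paper also relies on the identity $S_{n+1}=\frac{a_{n+1}}{2}+\frac{\overline{a_{n+1}}}{2}S_n$ and the reverse triangle inequality, which give $|S_{n+1}|\ge \frac{|a_{n+1}|}{4}$ whenever $|S_n|\le\frac12$; the only difference is that it organizes this as a case split on $\varliminf |S_n|/|a_n|$, while your consecutive-pair dichotomy is slightly cleaner and yields the explicit constant $\frac14$ with at least one good index in every pair $\{n,n+1\}$.
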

\begin{proof}
  For each $n\ge 1$, set
\[S_n:=\frac{a_n}{2}+\frac{\overline{a_n}}{2}\frac{a_{n-1}}{2}+\cdots+\frac{\overline{a_n}}{2}\frac{\overline{a_{n-1}}}{2}\cdots\frac{a_1}{2}.\]
  We distinguish two cases.
  \begin{enumerate}[i)]
    \item Suppose $\underset{n\to +\infty}{\varliminf}\frac{1}{|a_n|}\left|S_n\right|>0$.
    Then, there exists a constant $c>0$ and a sequence $\{n_k\}$, such that 
    \[\lim_{n\to +\infty}\frac{1}{|a_{n_k}|}\left|S_{n_k}\right|=c.\]
    Hence \eqref{lb1} holds for infinitely many $n$.
    \item Suppose $\underset{n\to +\infty}{\varliminf}\frac{1}{|a_n|}\left|S_n\right|=0$. Then, there exists a sequence $\{n_k\}$, such that for any $k$ large enough
    \[\left|S_{n_k}\right|\le \frac{|a_{n_k}|}{2}\le \frac{1}{2}.\]
    Hence, by the same calculation as in the proof of Lemma \ref{lbof}, we obtain
    \[\left|S_{n_k+1}\right|\ge \frac{|a_{n_k+1}|}{4}\]
    for all sufficiently large $k$. Since $\{n_k+1\}$ is an infinite sequence, this completes the proof.
  \end{enumerate}
\end{proof}
Now, we are at the position to prove the upper bound of $\dim_F(\mu_{2,a,\ell})$.
\begin{proposition}\label{flyws2}
  \[\dim_F(\mu_{2,a,\ell})\le 2\underset{n\to +\infty}{\varliminf}\frac{\log(|a_n|)}{-\ell_n \log(2)}.\]
\end{proposition}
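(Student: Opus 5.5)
Write $\alpha:=\varliminf_{n\to+\infty}\frac{\log|a_n|}{-\ell_n\log 2}$. We want to show $s:=\dim_F(\mu_{2,a,\ell})\le 2\alpha$. The plan is the contrapositive of the lower-bound half of Proposition \ref{rpfws1}. By definition of $s$, for every $\varepsilon>0$ we have $|\widehat{\mu}_{2,a,\ell}(\xi)|\le C_\varepsilon|\xi|_p^{-(s-\varepsilon)/2}$ for all $\xi\in\widehat{\mathbb{Z}}_2$. So it suffices to find a sequence $\xi_k$ with $|\xi_k|_p=2^{\ell_{t_k}}$ and $|\widehat{\mu}_{2,a,\ell}(\xi_k)|\gtrsim|a_{t_k}|$, along indices $t_k$ on which the $\varliminf$ defining $\alpha$ is essentially attained, or at least along which $\frac{\log|a_{t_k}|}{-\ell_{t_k}\log 2}$ stays close to $\alpha$. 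Given such $\xi_k$, the same computation as in Proposition \ref{rpfws1} gives $\frac{\log|a_{t_k}|}{-\ell_{t_k}\log2}\ge\frac{s-\varepsilon}{2}-o(1)$. Passing to the limit and letting $\varepsilon\to0$ yields $\alpha\ge s/2$.

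I split into cases. If $\alpha=0$ there is nothing to prove unless $s>0$, and the argument below covers it anyway. If $\alpha>0$, then $|a_n|\to0$, so in particular $\varliminf|a_n|<1$. In that case Lemma \ref{lbof} applies, and its proof actually shows more than stated. Whenever $|a_{t-1}|\le a<1$, we have $|\widehat{\mu}_{2,a,\ell}(2^{-\ell_t})|\ge\frac{1-a}{2}|a_t|$. When $|a_n|\to0$, this holds for \emph{every} large $t$, not just a subsequence. Hence for all large $n$, $|\widehat{\mu}_{2,a,\ell}(2^{-\ell_n})|\gtrsim|a_n|$. Choosing $t_k$ to realize the $\varliminf$ then finishes this case.

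The remaining case is $\alpha=0$ with $s>0$, which we want to rule out. Here I plan to show that $s>0$ forces $|a_n|\to0$, which reduces to the previous case. Suppose instead $|a_{n_k}|\ge c>0$ along a subsequence. The natural test point is $\xi=2^{-\ell_{n_k}}$. Its Fourier coefficient is the chain sum $S_n$ of Lemma \ref{ybfs}, truncated by the $\lambda(j)$ factors, and this is exactly where cancellation can occur. Lemma \ref{ybfs} guarantees that infinitely many $n$ satisfy $|S_n|\gtrsim|a_n|$. More precisely, if $S_{n}$ is small, then $S_{n+1}\gtrsim|a_{n+1}|$. So along $n_k$ or $n_k+1$ we obtain $|\widehat{\mu}(2^{-\ell_n})|\gtrsim|a_n|$.

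The main obstacle is keeping the lower bound attached to the right indices. We must make sure that $|a_{n_k+1}|$ is not much smaller than $|a_{n_k}|$, and, in the general case, that the indices where Lemma \ref{ybfs} or Lemma \ref{lbof} gives nondegeneracy are the indices realizing the $\varliminf$. My first attempt would be the dichotomy of Lemma \ref{ybfs} applied directly at the realizing subsequence $n_k$. Either $|S_{n_k}|\gtrsim|a_{n_k}|$, and we are done. Or $S_{n_k}$ nearly cancels, which forces $|a_{n_k-1}|$ to be comparable to $1$. In that second case, since $\ell$ increases, I would use $|a_{n_k-1}|\approx1$ together with $\ell_{n_k-1}<\ell_{n_k}$ to conclude $\frac{\log|a_{n_k-1}|}{-\ell_{n_k-1}\log2}\to0$. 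This would again contradict $\alpha>0$, or be harmless when $\alpha=0$.
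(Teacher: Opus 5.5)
Your argument for the case $|a_n|\to 0$ is correct and is the paper's case (i). This case contains every situation with $\alpha>0$. In it, the estimate from the proof of Lemma \ref{lbof} holds at every large index, so you may follow a subsequence realizing the $\varliminf$. Your reduction ``$s>0$ forces $|a_n|\to0$'' is also the right thing to aim for.

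There is a genuine gap in the complementary case $|a_n|\not\to 0$. There $\alpha=0$ automatically, so what must be proved is $\dim_F(\mu_{2,a,\ell})=0$. That means exhibiting frequencies $\xi_k$ with $|\xi_k|_2\to+\infty$ and $|\widehat{\mu}_{2,a,\ell}(\xi_k)|$ bounded below, and neither of your alternatives does this.
\begin{itemize}
\item The route through Lemma \ref{ybfs} yields $|\widehat{\mu}_{2,a,\ell}(2^{-\ell_{n_k+1}})|\gtrsim|a_{n_k+1}|$. But $|a_{n_k+1}|$ may be super-polynomially small, as you notice yourself without resolving it.
\item Your ``first attempt'' extracts from near-cancellation at $n_k$ only that $|a_{n_k-1}|\approx 1$. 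That is information about the coefficients $a_n$, not about $\widehat{\mu}_{2,a,\ell}$, and it gives no lower bound for the Fourier transform at any frequency. Calling it ``harmless when $\alpha=0$'' misses the point, since when $\alpha=0$ the goal is exactly $s=0$.
\end{itemize}

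The missing idea is that the chain sum is self-similar. The same expansion used in Lemma \ref{lbof} gives, for $t\ge 2$,
\[
\widehat{\mu}_{2,a,\ell}(2^{-\ell_t})=\frac{a_t}{2}+\frac{\overline{a_t}}{2}\,\widehat{\mu}_{2,a,\ell}(2^{-\ell_t+1}),
\]
where the last coefficient equals $\widehat{\mu}_{2,a,\ell}(2^{-\ell_{t-1}})$ if $\ell_{t-1}=\ell_t-1$ and $0$ otherwise. Hence
\[
|\widehat{\mu}_{2,a,\ell}(2^{-\ell_t})|\ge\frac{|a_t|}{2}\bigl(1-|\widehat{\mu}_{2,a,\ell}(2^{-\ell_t+1})|\bigr).
\]
You should run your dichotomy on this inner Fourier coefficient, not on its crude bound $|a_{t-1}|$. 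If $|a_{n_k}|\ge c$, then either $|\widehat{\mu}_{2,a,\ell}(2^{-\ell_{n_k}+1})|\ge\frac12$ or $|\widehat{\mu}_{2,a,\ell}(2^{-\ell_{n_k}})|\ge\frac c4$. Either way $|\widehat{\mu}_{2,a,\ell}|\ge\min\{\frac12,\frac c4\}$ at a frequency of norm at least $2^{\ell_{n_k}-1}\to+\infty$, so $s=0$.

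The paper handles this case differently, by splitting it further:
\begin{itemize}
\item For $\varliminf|a_n|>0$, it uses a preliminary bound with $\varlimsup$ in place of $\varliminf$. This comes from Lemma \ref{ybfs} when the $\ell_i$ are eventually consecutive, and from the frequencies $2^{-\ell_{n_k+1}}$ after gaps $\ge 2$ otherwise.
\item For $\varliminf|a_n|=0<\varlimsup|a_n|$, it follows runs of small coefficients to an index $m$ where $|a_m|$ becomes large again. There $|a_{m-1}|$ is still small, so the estimate of Lemma \ref{lbof} gives $|\widehat{\mu}_{2,a,\ell}(2^{-\ell_m})|\gtrsim|a_m|$, which is bounded below.
\end{itemize}
The recursion above handles both subcases at once.
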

\begin{proof}
  First, we claim that 
  \begin{align}\label{bad up}
    \dim_F(\mu_{2,a,\ell})\le 2\underset{n\to +\infty}{\varlimsup}\frac{\log(|a_n|)}{-\ell_n \log(2)}.
  \end{align}
  The proof of \eqref{bad up} proceeds by a case distinction on the sequence $\{\ell_i\}$.
  \begin{enumerate}[i)]
    \item Suppose $\ell_{i+1}=\ell_{i}+1$ for any $i$ large enough. Then, for any $n$ sufficiently large, we have
    \[|\widehat{\mu}_{2,a,\ell}(2^{-n})|\gtrsim \left|\frac{a_n}{2}+\frac{\overline{a_n}}{2} \frac{a_{n-1}}{2}+\cdots+\frac{\overline{a_{n}}}{2}\frac{\overline{a_{n-1}}}{2}\cdots \frac{a_1}{2}\right|.\]
    By Lemma \ref{lb1}, there exist infinitely many positive integers $n$ such that 
    \[|\widehat{\mu}_{2,a,\ell}(2^{-n})|\gtrsim |a_n|.\]
    Therefore, by the same argument in the proof of Proposition \ref{rpfws1}, the inequality \eqref{bad up} holds.
    \item Suppose there exists infinitely many $n$, such that $\ell_{n+1}-\ell_n\ge 2$. Then, there exists a sequence $\{n_k\}$ such that $\ell_{n_k+1}\ge \ell_{n_k}+2$. Let $\xi_k=2^{-\ell_{n_k+1}}$. Then, 
    \[|\widehat{\mu}_{2,a,\ell}(\xi_k)|=\frac{|a_{n_k+1}|}{2}.\]
    Hence, by the same argument in the proof of Proposition \ref{rpfws1}, the inequality \eqref{bad up} holds.
  \end{enumerate}
  Now, we prove Proposition \ref{flyws2} by a case distinction on the sequence $\{a_i\}$.
  \begin{enumerate}[i)]
    \item Suppose $\underset{n\to +\infty}{\lim}|a_n|=0$. Then, we can set $t_k=k-1$ in the proof of Lemma \ref{lbof}. Hence, for all sufficiently large $k$, we have   
    \[|\widehat{\mu}_{2,a,\ell}(2^{-\ell_{k}})|\gtrsim |a_{k}|.\]
    Therefore, by the same argument in the proof of Proposition \ref{rpfws1}, we complete the proof under this case.
    \item Suppose $\underset{n\to +\infty}{\varliminf}|a_n|>0$. Then, we have 
    $$\lim\limits_{n\to +\infty}\frac{\log(|a_n|)}{-\ell_n}=0.$$ 
    By \eqref{bad up}, we obtain $\dim_F(\widehat{\mu}_{2,a,\ell})=0$. 
    \item Suppose $\underset{n\to +\infty}{\varliminf}|a_n|=0$ but $\underset{n\to +\infty}{\varlimsup}|a_n|>0$. Since $\underset{n\to +\infty}{\varliminf}|a_n|=0$, by Lemma \ref{lbof}, there exists a sequence $\{t_k\}$ such that for any $k$ large enough,
    \[|\widehat{\mu}_{2,a,\ell}(2^{-\ell_{t_k}})|\gtrsim |a_{t_k}|.\]
    If $\underset{n\to +\infty}{\varlimsup}|a_{t_k}|>0$, then 
    \[\underset{|\xi|_p\to +\infty}{\varlimsup}|\widehat{\mu}_{2,a,\ell}(\xi)|>0,\]
    which implies $\dim_F(\widehat{\mu}_{2,a,\ell})=0$. Otherwise, we have $\underset{n\to +\infty}{\lim}|a_{t_k}|=0$. By the same argument in the proof of case (i), we have that for any $k$ large enough,
    \[|\widehat{\mu}_{2,a,\ell}(2^{-\ell_{t_k+1}})|\gtrsim |a_{t_k+1}|.\]
    Since $\underset{n\to +\infty}{\varlimsup}|a_n|>0$, iterating the preceding argument yields a sequence of indices $\{s_k\}$ with $t_{k}\le s_k\le t_{k+1}$ such that $\underset{k\to +\infty}{\varlimsup}|a_{s_k}|>0$. Hence, $\dim_F(\widehat{\mu}_{2,a,\ell})=0$.
  \end{enumerate}
\end{proof}
\begin{proof}[Proof of Theorem \ref{thm1.5}]
  Combining Propositions \ref{rpfws1}, \ref{proof of lower bound} and \ref{flyws2}, we complete the proof.
\end{proof}


%
%
%
%

 { }


\vspace*{10ex}

\noindent Cheng Liu: School of Mathematics and Statistics,
Wuhan University, 
\vspace{-2mm}

\noindent\phantom{Cheng Liu: }Bayi Road 299, Wuchang District, Wuhan, China

\noindent\phantom{Cheng Liu: }e-mail: 2023202010021@whu.edu.cn 


\vspace{5mm}

\noindent Lingmin Liao: School of Mathematics and Statistics,
Wuhan University, 
\vspace{-2mm}

\noindent\phantom{Lingmin Liao: }Bayi Road 299, Wuchang District, Wuhan, China

\noindent\phantom{Lingmin Liao: }e-mail: lmliao@whu.edu.cn 

\end{document}